\documentclass[a4paper]{article}
\usepackage{fullpage}
\usepackage{amsmath}
\usepackage{amssymb}
\usepackage{amsthm}
\usepackage{graphicx}
\usepackage{bbm}
\usepackage{enumerate}
\usepackage{microtype}
\usepackage{tocbibind}
\usepackage{xcolor}
\usepackage[none]{hyphenat}
\usepackage[colorlinks=true,urlcolor=blue,linkcolor=blue,plainpages=false,pdfpagelabels]{hyperref}
\allowdisplaybreaks

\newtheorem{theorem}{Theorem}[section]
\newtheorem{proposition}[theorem]{Proposition}

\newtheorem{remark}[theorem]{Remark}

\newtheorem{definition}[theorem]{Definition}

\newcommand{\N}{\mathbb{N}}

\newcommand{\R}{\mathbb{R}}

\newcommand{\eps}{\varepsilon}
\newcommand{\vp}{\varphi}
\newcommand{\se}{\subseteq}

\newcommand{\id}{\mathrm{id}}

\Urlmuskip=0mu  plus 5mu

\title{The computational content of super strongly nonexpansive mappings and uniformly monotone operators}
\author{Andrei Sipo\c s${}^{a,b,c}$\\[2mm]
\footnotesize ${}^a$Research Center for Logic, Optimization and Security (LOS), Department of Computer Science,\\
\footnotesize Faculty of Mathematics and Computer Science, University of Bucharest,\\
\footnotesize Academiei 14, 010014 Bucharest, Romania\\[1mm]
\footnotesize ${}^b$Simion Stoilow Institute of Mathematics of the Romanian Academy,\\
\footnotesize Calea Grivi\c tei 21, 010702 Bucharest, Romania\\[1mm]
\footnotesize ${}^c$Institute for Logic and Data Science,\\
\footnotesize Popa Tatu 18, 010805 Bucharest, Romania\\[2mm]
\footnotesize Email: andrei.sipos@fmi.unibuc.ro\\
}
\date{}

\begin{document}

\maketitle

\begin{abstract}
Recently, Liu, Moursi and Vanderwerff have introduced the class of super strongly nonexpansive mappings as a counterpart to operators which are maximally monotone and uniformly monotone. We give a  quantitative study of these notions in the style of proof mining, providing a modulus of super strong nonexpansiveness, giving concrete examples of it and connecting it to moduli associated to uniform monotonicity. For the supercoercive case, we analyze the situation further, yielding a quantitative inconsistent feasibility result for this class (obtaining effective uniform rates of asymptotic regularity), a result which is also qualitatively new.

\noindent {\em Mathematics Subject Classification 2020}: 47H05, 47H09, 47J25, 03F10.

\noindent {\em Keywords:} Strongly nonexpansive mappings, super strongly nonexpansive mappings, uniformly monotone operators, reflected resolvent, asymptotic regularity, proof mining.
\end{abstract}

\section{Introduction}

In 2003, Bauschke \cite{Bau03} solved in the positive the `zero displacement conjecture' of convex optimization, which states that, given a Hilbert space $X$, $m \in \N^*$ and $C_1,\ldots C_m \se X$ closed, convex, nonempty sets and setting $R:= P_{C_m} \circ \ldots \circ P_{C_1}$, we have that, for every $x \in X$,
$$\lim_{n \to \infty} \|R^nx- R^{n+1}x\|=0,$$
that is, $R$ is {\it asymptotically regular} in the sense of Browder and Petryshyn \cite[Definition 1]{BroPet66}. This result was later extended from $R$ being a composition of projections to it being a composition of firmly nonexpansive mappings \cite{BauMarMofWan12} and then of averaged mappings \cite{BauMou20}, each such mapping $R_i$ being assumed to have the {\it approximate fixed point property}, i.e. for every $\eps>0$ there is a $p \in X$ such that $\|p-R_ip\|\leq\eps$. An extensive convex optimization monograph, containing facts on all these classes of mappings, is \cite{BauCom17}.

In 2019, Kohlenbach started the study of the above results from the point of view of {\it proof mining}, which is a research paradigm (first suggested by Kreisel in the 1950s under the name of `unwinding of proofs' and then highly developed starting in the 1990s by Kohlenbach and his students and collaborators) that aims to analyze ordinary mathematical proofs using tools from proof theory (a branch of mathematical logic) in order to extract additional -- typically quantitative -- information out of them (for more information, see Kohlenbach's monograph \cite{Koh08} and his recent research survey \cite{Koh19}). In the cases above, a typical piece of information one would like to obtain is a {\it rate of asymptotic regularity}, that is, for every $\eps>0$, an effective bound on the $N$ (depending on $\eps$ and maybe some other parameters) such that for all $n \geq N$, $\|R^nx - R^{n+1}x\| \leq \eps$.

The proofs of those theorems, in order to establish asymptotic regularity, used the property of $R$ of being {\it strongly nonexpansive}, a notion first introduced by Bruck and Reich in \cite{BruRei77}, and which had already been analyzed from the point of view of proof mining by Kohlenbach in 2016 \cite{Koh16}. The way was, thus, open to find the desired rates of asymptotic regularity, and this was done by Kohlenbach for the cases of projections and firmly nonexpansive mappings in \cite{Koh19b}; later, this was extended to the averaged mappings case by the author in \cite{Sip22}.

It was a bit surprising at the time that these results could be analyzed at all from the point of view of proof mining, since the proofs used quite deep results of monotone operator theory (which relied on principles regarded as proof-theoretically strong), like the Minty theorem or the Br\'ezis-Haraux theorem \cite{BreHar76}, but analyzed they were, and the extracted rates turned out to be not too complex -- for example, in the case of projections, the rate was polynomial of degree eight. (This is an instance of a broader phenomenon called `proof-theoretic tameness', discussed in \cite{Koh20}.) Recently, this received a logical explanation through the thorough proof-theoretic treatments of monotone operator theory in \cite{Pis22} and of the Br\'ezis-Haraux theorem specifically in \cite{KohPis}. The notion underlying all these asymptotic regularity proofs was that of {\it rectangularity} of an operator (first introduced by Br\'ezis and Haraux, also in \cite{BreHar76}, and named as such by Simons \cite{Sim06}), which was given in the latter paper the quantitative form of a {\it modulus of uniform rectangularity}.

The way this notion of rectangularity had been invoked in the proof of the case of averaged mappings in \cite{BauMou20} had been highly conceptual, as the proof made use of the fact that if $A$ is a maximally monotone operator, then its {\it reflected resolvent}, $R_A$, is an averaged mapping iff $A^{-1}$ is strongly monotone, and thus, in this case, $A^{-1}$ (or, equivalently, $A$ itself) is rectangular. A natural question to ask, then, is which class of mappings corresponds in this way to the class of operators which are {\it uniformly monotone}. This was answered in 2022 by Liu, Moursi and Vanderwerff in \cite{LiuMouVan22}, who introduced the class of {\it super strongly nonexpansive mappings} to fulfill this role.

{\it The goal of this paper is to analyze this class of super strongly nonexpansive mappings from the point of view of proof mining and to extend, as much as possible, the above asymptotic regularity results, qualitatively and quantitatively.}

Towards that end, in Section~\ref{sec:ssne} we define the notion of an `SSNE-modulus' (analogously to the `SNE-modulus' defined in \cite{Koh16}) as the quantitative counterpart of super strong nonexpansiveness. We also give concrete examples of SSNE-moduli in the cases of averaged mappings, contractions for large distances and strongly nonexpansive mappings on the real line, and show how they behave under composition.

Section~\ref{sec:um} is dedicated to the connection to uniformly monotone operators. We first give a quite abstract (as it also applies to other similar notions, including super strong nonexpansiveness itself) characterization of uniform monotonicity, showing how different kinds of moduli are related, in the form of Proposition~\ref{prop-m}. Then, using the most flexible kind of modulus, we show how one can go back and forth, quantitatively, between (inverse) uniform monotonicity and super strong nonexpansiveness. In the final part of the section, we obtain a modulus of uniform continuity for operators which are maximally monotone and inverse uniformly monotone, as qualitative uniform continuity had been shown in \cite[Theorem 4.5]{LiuMouVan22}.

Finally, in Section~\ref{sec:rect} we treat the case where the modulus of uniform monotonicity is supercoercive, since in that case it is known that the operator is rectangular. We refine the abstract characterization from before in order to treat this case, and we obtain a characterization of supercoercivity which is not expressed in terms of the modulus of uniform monotonicity. We then obtain the analogous subclass of the one of super strongly nonexpansive mappings, whose elements we dub {\it supercoercively super strongly nonexpansive mappings}, and show that averaged mappings and contractions for large distances belong to it. We are then able to obtain quantitative forms of rectangularity (Proposition~\ref{prop-theta}), the approximate fixed point property (Theorem~\ref{thm-psi}) and asymptotic regularity (Theorem~\ref{thm-main}) for this class of mappings. The last two results are also qualitatively new, so we summarize them without the corresponding moduli and rates in the form of Theorem~\ref{thm-qual}.

\section{Moduli for super strongly nonexpansive mappings}\label{sec:ssne}

\begin{definition}
Let $X$ be a Banach space, $S \se X$ and $T: S \to X$. We say that $T$ is {\bf nonexpansive} if, for all $x$, $y \in S$, $\|Tx-Ty\| \leq \|x-y\|$.
\end{definition}

The following definition was introduced by Bruck and Reich \cite{BruRei77}.

\begin{definition}
Let $X$ be a Banach space, $S \se X$ and $T: S \to X$. We say that $T$ is {\bf strongly nonexpansive} if it is nonexpansive and, for all $(x_n)$, $(y_n) \se S$ such that the sequence $(x_n-y_n)$ is bounded and $\lim_{n \to \infty} (\|x_n - y_n\| - \|Tx_n - Ty_n\|) = 0$, we have that $\lim_{n \to \infty} ((x_n - y_n) - (Tx_n - Ty_n)) = 0$.
\end{definition}

The following quantitative version of strong nonexpansiveness was given in \cite{Koh16}.

\begin{proposition}\label{sne-eq}
Let $X$ be a Banach space, $S \se X$ and $T: S \to X$. TFAE:
\begin{enumerate}[(a)]
\item $T$ is strongly nonexpansive;
\item there exists a function $\omega:(0,\infty) \times (0,\infty) \to (0,\infty)$ (called an {\bf SNE-modulus}) such that, for any $b$, $\eps > 0$ and $x$, $y \in S$ with $\|x-y\| \leq b$ and $\|x-y\| - \|Tx-Ty\| < \omega(b,\eps)$, we have that $\|(x-y) - (Tx-Ty)\| < \eps$.
\end{enumerate}
\end{proposition}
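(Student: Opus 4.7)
The proof is a standard proof-mining equivalence between a $\forall\exists$-statement and the existence of a uniform modulus, handled via the usual contrapositive argument.

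For the easier direction (b) $\Rightarrow$ (a), the plan is to use $\omega$ in two ways. First, to verify nonexpansiveness: if $x,y \in S$ with $x \neq y$, set $b := \|x-y\|$ and suppose for contradiction that $\|Tx-Ty\| > \|x-y\|$. Then $\|x-y\| - \|Tx-Ty\| < 0 < \omega(b,\eps)$ for every $\eps > 0$, hence by (b) we get $\|(x-y) - (Tx-Ty)\| < \eps$ for all $\eps > 0$, which forces $x-y = Tx-Ty$ and in particular $\|x-y\| = \|Tx-Ty\|$, a contradiction. Second, to verify the strong nonexpansiveness condition on sequences: given $(x_n),(y_n)$ with $(x_n - y_n)$ bounded by some $b > 0$ and $\|x_n-y_n\| - \|Tx_n-Ty_n\| \to 0$, fix $\eps > 0$; then eventually $\|x_n-y_n\| - \|Tx_n-Ty_n\| < \omega(b,\eps)$, so eventually $\|(x_n-y_n) - (Tx_n-Ty_n)\| < \eps$, which gives the desired limit.

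For the harder direction (a) $\Rightarrow$ (b), the plan is a proof by contraposition. Suppose no SNE-modulus exists. Then there are $b,\eps > 0$ such that for each $n \in \N^*$ there exist $x_n,y_n \in S$ with $\|x_n - y_n\| \leq b$ and $\|x_n - y_n\| - \|Tx_n - Ty_n\| < 1/n$ but $\|(x_n - y_n) - (Tx_n - Ty_n)\| \geq \eps$. The sequence $(x_n - y_n)$ is bounded by $b$, and nonexpansiveness of $T$ together with the first bound gives $\|x_n - y_n\| - \|Tx_n - Ty_n\| \to 0$; yet the third inequality shows $((x_n - y_n) - (Tx_n - Ty_n))$ does not converge to $0$. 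This directly contradicts the definition of strong nonexpansiveness, completing the argument.

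Neither direction presents a real obstacle; the mild technical point is simply making sure the witnesses produced by negating (b) form legitimate inputs for the defining property in (a), in particular that the bound $b$ extracted from the negation supplies the required boundedness of $(x_n - y_n)$. This is essentially the template Kohlenbach uses in \cite{Koh16} to pass from a qualitative convergence statement to a uniform quantitative formulation, and the same template transfers verbatim here.
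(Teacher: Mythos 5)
Your proof is correct and takes essentially the same approach the paper intends: the paper itself only cites \cite[Lemma 2.2]{Koh16} and \cite[Remark 2]{Koh19b} for this proposition, but its in-text proof of the analogous Proposition~\ref{ssne-eq} follows exactly your template (negate the pointwise existence of a suitable $\omega(b,\eps)$ to produce counterexample sequences; conversely, first use $\omega$ to deduce nonexpansiveness, then apply it termwise to sequences). Your explicit remark that nonexpansiveness is what turns the one-sided bound $\|x_n-y_n\|-\|Tx_n-Ty_n\|<1/n$ into actual convergence to $0$ fills a step the paper leaves implicit.
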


\begin{proof}
See \cite[Lemma 2.2]{Koh16} and \cite[Remark 2]{Koh19b}.
\end{proof}

The following definition was recently introduced by Liu, Moursi and Vanderwerff \cite{LiuMouVan22}.

\begin{definition}[{\cite[Definition 3.1]{LiuMouVan22}}]
Let $X$ be a Banach space, $S \se X$ and $T: S \to X$. We say that $T$ is {\bf super strongly nonexpansive} if it is nonexpansive and, for all $(x_n)$, $(y_n) \se S$ such that $\lim_{n \to \infty} (\|x_n - y_n\|^2 - \|Tx_n - Ty_n\|^2) = 0$, we have that $\lim_{n \to \infty} ((x_n - y_n) - (Tx_n - Ty_n)) = 0$.
\end{definition}

\begin{remark}
Note that there is no boundedness condition in the above definition.
\end{remark}

We now give the analogue of Proposition~\ref{sne-eq} for super strongly nonexpansive mappings.

\begin{proposition}\label{ssne-eq}
Let $X$ be a Banach space, $S \se X$ and $T: S \to X$. TFAE:
\begin{enumerate}[(a)]
\item $T$ is super strongly nonexpansive;
\item there exists a function $\chi:(0,\infty) \to (0,\infty)$ (called an {\bf SSNE-modulus}) such that, for any $\eps > 0$ and $x$, $y \in S$ with $\|x-y\|^2 - \|Tx-Ty\|^2 < \chi(\eps)$, we have that $\|(x-y) - (Tx-Ty)\| < \eps$.
\end{enumerate}
\end{proposition}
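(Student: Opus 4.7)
The plan is to mimic the standard proof-theoretic tactic used to pass between the qualitative and quantitative formulations of strong nonexpansiveness (as in \cite[Lemma 2.2]{Koh16}), adapted to the setting where there is no boundedness parameter. The SSNE case turns out to be in fact slightly cleaner than the SNE case precisely because the hypothesis already involves the squared norms directly, so no bound on $\|x-y\|$ is needed to extract a convergent subsequence of the gap.

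For the direction (b) $\Rightarrow$ (a), I would first observe that the quantitative implication in (b) forces $T$ to be nonexpansive: if $\|Tx-Ty\| > \|x-y\|$ for some $x,y \in S$, then $\|x-y\|^2 - \|Tx-Ty\|^2 < 0 < \chi(\eps)$ would hold for every $\eps > 0$, forcing $(x-y) = (Tx-Ty)$ and hence $\|Tx-Ty\| = \|x-y\|$, a contradiction. Once nonexpansiveness is in hand, given sequences $(x_n), (y_n) \se S$ with $\|x_n-y_n\|^2 - \|Tx_n-Ty_n\|^2 \to 0$, one simply fixes $\eps > 0$, uses the convergence to find $N$ such that for all $n \geq N$, $\|x_n-y_n\|^2 - \|Tx_n-Ty_n\|^2 < \chi(\eps)$, and then directly invokes (b).

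For the harder direction (a) $\Rightarrow$ (b), I would argue by contrapositive. If no SSNE-modulus $\chi$ exists, then there is some $\eps_0 > 0$ such that for every $\delta > 0$ one can find $x, y \in S$ violating the required implication. Applying this with $\delta = 1/n$ produces sequences $(x_n), (y_n) \se S$ satisfying both
\[
\|x_n - y_n\|^2 - \|Tx_n - Ty_n\|^2 < \tfrac{1}{n}\quad\text{and}\quad \|(x_n - y_n) - (Tx_n - Ty_n)\| \geq \eps_0.
\]
Since $T$ is nonexpansive, the left-hand side of the first inequality is nonnegative, hence it converges to $0$; super strong nonexpansiveness then forces $\|(x_n - y_n) - (Tx_n - Ty_n)\| \to 0$, contradicting the second inequality.

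The only delicate point — and it is a minor one — is remembering to verify nonexpansiveness from (b) before using the modulus in the first direction, since (b) as stated asserts only the quantitative implication; the SSNE definition explicitly bundles nonexpansiveness into the qualitative statement (a). No genuine obstacle is expected: the absence of a bound parameter on $\|x-y\|$ in the modulus means the contrapositive extraction produces the needed sequences outright, without any subsequence or compactness argument.
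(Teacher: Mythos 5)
Your proposal is correct and follows essentially the same approach as the paper: for (b)$\Rightarrow$(a) you first extract nonexpansiveness from the unboundedness of valid $\eps$ and then apply the modulus to the tail of the sequence, and for (a)$\Rightarrow$(b) you argue by contrapositive, building witness sequences with $\delta = 1/n$ and invoking the qualitative definition to reach a contradiction. The one point you make explicit that the paper leaves implicit --- that nonexpansiveness (built into (a)) is what guarantees the gap $\|x_n-y_n\|^2 - \|Tx_n-Ty_n\|^2$ is nonnegative, so that being bounded above by $1/n$ actually yields convergence to $0$ --- is worth noting, but it is the same argument.
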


\begin{proof}
The proof largely follows the lines of \cite[Lemma 2.2]{Koh16} and \cite[Remark 2]{Koh19b}, but we give it below for completeness.

For `$(a)\Rightarrow(b)$', we may assume that there is an $\eps>0$ such that, for all $\delta > 0$, there are $x$, $y \in S$ with $\|x-y\|^2 - \|Tx-Ty\|^2 < \delta$ and $\|(x-y) - (Tx-Ty)\| \geq \eps$. For if this were not so, we may define a function $\chi:(0,\infty) \to (0,\infty)$ by choosing for every $\eps>0$ a $\delta>0$ such that for all $x$, $y \in S$ with $\|x-y\|^2 - \|Tx-Ty\|^2 < \delta$, we have that $\|(x-y) - (Tx-Ty)\| < \eps$, which is the kind of function that we have to construct.

By our assumption, we may construct sequences $(x_n)$, $(y_n) \se S$ by putting, for any $n \in \N$, $x_n$ and $y_n \in S$ to be such that $\|x_n-y_n\|^2 - \|Tx_n-Ty_n\|^2 < \frac1{n+1}$ and $\|(x_n-y_n) - (Tx_n-Ty_n)\| \geq \eps$. But, then, $\lim_{n \to \infty} (\|x_n - y_n\|^2 - \|Tx_n - Ty_n\|^2) = 0$, while $((x_n - y_n) - (Tx_n - Ty_n))$ does not converge to $0$, a contradiction.

We now show `$(b)\Rightarrow(a)$'. We first have to show that $T$ is nonexpansive. Assume that there are $x$, $y \in S$ such that $\|Tx-Ty\| > \|x-y\|$, so $\|Tx-Ty\|^2 > \|x-y\|^2$ and $\|x-y\|^2 - \|Tx-Ty\|^2 <0$. Then, for every $\eps >0$, we have that $\|x-y\|^2 - \|Tx-Ty\|^2 < \chi(\eps)$, so $\|(x-y) - (Tx-Ty)\| < \eps$. Thus, $x-y=Tx-Ty$, so $\|Tx-Ty\|\leq\|x-y\|$.

For the second condition, let $(x_n)$, $(y_n) \se S$ be such that $\lim_{n \to \infty} (\|x_n - y_n\|^2 - \|Tx_n - Ty_n\|^2) = 0$. We have that for all $\eps>0$ there is an $N \in \N$ such that for all $n \geq N$, $\|x_n - y_n\|^2 - \|Tx_n - Ty_n\|^2 < \chi(\eps)$, so $\|(x_n - y_n) - (Tx_n - Ty_n)\| < \eps$. Thus, we have that $\lim_{n \to \infty} ((x_n - y_n) - (Tx_n - Ty_n)) = 0$.
\end{proof}

The following is a quantitative version of \cite[Proposition 3.2]{LiuMouVan22}, which shows that every super strongly nonexpansive mapping is strongly nonexpansive.

\begin{proposition}\label{omega-bullet}
For any $\chi:(0,\infty) \to (0,\infty)$, we define $\omega_\chi: (0,\infty) \times (0,\infty) \to (0,\infty)$, by putting, for any $b>0$ and $\eps > 0$,
$$\omega_\chi(b,\eps):=\frac{\chi(\eps)}{2b}.$$
Let $\chi:(0,\infty) \to (0,\infty)$. Let $X$ be a Banach space, $S \se X$ and $T: S \to X$ be such that $T$ is super strongly nonexpansive with modulus $\chi$. Then $T$ is strongly nonexpansive with modulus $\omega_\chi$.
\end{proposition}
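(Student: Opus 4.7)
The plan is to verify directly that $\omega_\chi$ satisfies condition (b) of Proposition~\ref{sne-eq}, using the SSNE property of $T$ via condition (b) of Proposition~\ref{ssne-eq}. Fix $b,\eps>0$ and $x,y\in S$ with $\|x-y\|\leq b$ and $\|x-y\|-\|Tx-Ty\|<\omega_\chi(b,\eps)=\chi(\eps)/(2b)$; the goal is to conclude $\|(x-y)-(Tx-Ty)\|<\eps$.

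The key identity to exploit is the factorisation $\|x-y\|^2-\|Tx-Ty\|^2=(\|x-y\|-\|Tx-Ty\|)(\|x-y\|+\|Tx-Ty\|)$. First I would note that since $T$ is super strongly nonexpansive with modulus $\chi$, Proposition~\ref{ssne-eq} tells us in particular that $T$ is nonexpansive, so $\|Tx-Ty\|\leq\|x-y\|\leq b$. This gives me two pieces of information: the first factor is nonnegative, and the second factor is bounded above by $2b$. Hence
\[\|x-y\|^2-\|Tx-Ty\|^2\leq 2b\bigl(\|x-y\|-\|Tx-Ty\|\bigr)<2b\cdot\frac{\chi(\eps)}{2b}=\chi(\eps).\]

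Applying the SSNE-modulus property to the pair $(x,y)$ then yields $\|(x-y)-(Tx-Ty)\|<\eps$, which is exactly what an SNE-modulus requires. There is no real obstacle here: the whole argument reduces to the algebraic factorisation of a difference of squares together with the nonexpansiveness that comes for free from the hypothesis. The only thing to be slightly careful about is making sure we have nonexpansiveness in hand before bounding $\|x-y\|+\|Tx-Ty\|$ by $2b$; but this is immediate from Proposition~\ref{ssne-eq}(b)$\Rightarrow$(a), which was established in the preceding proof.
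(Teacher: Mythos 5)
Your proof is correct and follows exactly the same route as the paper's: factor the difference of squares, use nonexpansiveness (which comes from the SSNE hypothesis) to bound $\|x-y\|+\|Tx-Ty\|$ by $2b$, and apply the SSNE-modulus to the resulting bound $\|x-y\|^2-\|Tx-Ty\|^2<\chi(\eps)$.
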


\begin{proof}
Let $b$, $\eps > 0$ and $x$, $y \in S$ with $\|x-y\| \leq b$ and $\|x-y\| - \|Tx-Ty\| < \omega_\chi(b,\eps)$. We want to show that $\|(x-y) - (Tx-Ty)\| < \eps$. Since $T$ is nonexpansive, we have that
$$\|x-y\|^2 - \|Tx-Ty\|^2 = (\|x-y\|-\|Tx-Ty\|)(\|x-y\|+\|Tx-Ty\|) < \omega_\chi(b,\eps) \cdot 2b = \chi(\eps).$$
Since $\chi$ is an SSNE-modulus, it follows that $\|(x-y) - (Tx-Ty)\| < \eps$.
\end{proof}

We now proceed to give examples of super strongly nonexpansive mappings. The following is a quantitative version of \cite[Proposition 3.3]{LiuMouVan22}, which shows that, on the real line, the notions of strong nonexpansiveness and super strong nonexpansiveness coincide.

\begin{proposition}
Let $\omega:(0,\infty) \times (0,\infty) \to (0,\infty)$ and $T: \R \to \R$ be such that $T$ is strongly nonexpansive with modulus $\omega$. Then $T$ is super strongly nonexpansive with modulus
$$\chi(\eps):=\min\left((\omega(1,\eps))^2,\eps^2,(\omega(1,1/2))^2,1\right).$$
\end{proposition}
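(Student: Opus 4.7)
The plan is to set $a := x - y$, $b := Tx - Ty$, note that nonexpansiveness yields $|b| \le |a|$ so that $a^2 - b^2 \ge 0$, and split into cases according to the magnitude of $|a|$ and, when $|a|$ is large, according to the sign of $ab$. The four summands in the definition of $\chi$ will correspond exactly to these situations.

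The two easy branches go as follows. If $|a| \le 1$, the elementary identity $(|a|-|b|)^2 \le (|a|-|b|)(|a|+|b|) = a^2 - b^2 < (\omega(1,\eps))^2$ gives $|a| - |b| < \omega(1,\eps)$, so the SNE-modulus invoked at $(1, \eps)$ directly yields $|a - b| < \eps$. If $|a| > 1$ and $ab \ge 0$, the signs of $a$ and $b$ agree (or one vanishes), so that $|a - b| = |a| - |b|$; the same algebraic inequality combined with $\chi(\eps) \le \eps^2$ gives $|a - b| \le \sqrt{a^2 - b^2} < \eps$ immediately.

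The remaining sub-case, $|a| > 1$ with $ab < 0$, is the delicate one and I intend to rule it out by contradiction. Assuming WLOG $a > 1$ and $b < 0$, so that $|b| > \sqrt{a^2 - \chi(\eps)}$, I would introduce the auxiliary point $u := y + 1$, which lies strictly between $y$ and $x$, and use the decomposition $b = (Tx - Tu) + (Tu - Ty)$ together with the nonexpansive bounds $|Tx - Tu| \le a - 1$ and $|Tu - Ty| \le 1$. A short computation exploiting $a - \sqrt{a^2 - \chi(\eps)} < \chi(\eps)$ (valid because $a > 1$) forces $Tu - Ty < \chi(\eps) - 1 \le 0$, whence $1 - |Tu - Ty| < \chi(\eps)$. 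Since the two constraints $\chi(\eps) \le (\omega(1, 1/2))^2$ and $\chi(\eps) \le 1$ together imply $\chi(\eps) \le \omega(1, 1/2)$ (whether $\omega(1, 1/2) \le 1$ or $> 1$), the SNE-modulus applied at $(1, 1/2)$ then yields $|1 - (Tu - Ty)| < 1/2$, hence $Tu - Ty > 1/2$, contradicting $Tu - Ty < 0$. The main obstacle is precisely this sub-case: here $|a - b| = |a| + |b|$ is genuinely large, so one cannot bound it directly and must instead route through the SNE property on the shorter segment $[y, u]$ of length $1$, which is what dictates the presence of the terms $(\omega(1, 1/2))^2$ and $1$ in the minimum defining $\chi$.
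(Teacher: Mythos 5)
Your proof is correct, and it uses the same essential tools as the paper's argument — the bound $(|a|-|b|)^2\le a^2-b^2$, the SNE-modulus at $(1,\eps)$, the $\eps^2$ term when the two increments share a sign, and an auxiliary point one unit from the nearer endpoint together with the SNE-modulus at $(1,1/2)$ — but with a different organization that is worth noting. The paper runs a single reductio: assuming $|(x-y)-(Tx-Ty)|\ge\eps$, it first derives $|x-y|>1$, then that the increments must have opposite signs, and finally closes the contradiction by a quantitative chain showing $(y-x)-(Tx-Ty)\ge 1$ while also $(y-x)-(Tx-Ty)<\sqrt{\chi(\eps)}\le 1$ (here the $1$ term in $\chi$ enters via $\sqrt{\chi(\eps)}\le 1$). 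You instead split upfront into three cases by the size of $|x-y|$ and the sign of $ab$, dispatch the first two directly, and rule out the third by producing two incompatible bounds on the single quantity $Tu-Ty$: it is forced negative by combining the hypothesis $a^2-b^2<\chi(\eps)$ with the nonexpansive bound $|Tx-Tu|\le a-1$ on the far segment and the identity $a-\sqrt{a^2-\chi(\eps)}<\chi(\eps)$ (valid since $a>1$ makes the conjugate denominator exceed $1$, and here the $1$ term in $\chi$ is what keeps $a^2-\chi(\eps)\ge 0$ and $\chi(\eps)-1\le 0$), and it is forced above $1/2$ by the SNE-modulus on the near segment. Your version makes transparent exactly which of the four terms in the minimum defining $\chi$ earns its keep in which case and localizes the final contradiction to a sign conflict; the paper's route spreads the contradiction over a chain of inequalities but avoids the small algebraic verification you need. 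Both arguments are sound and give the same modulus.
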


\begin{proof}
Assume, towards a contradiction, that there are $\eps>0$ and $x$, $y \in \R$ such that $|x-y|^2 - |Tx-Ty|^2 < \chi(\eps)$ and $|(x-y) - (Tx-Ty)| \geq \eps$. It is immediate that, for any $a$, $b \in \R$, $(|a| - |b|)^2 \leq ||a|^2 - |b|^2|$. Thus, using also that $T$ is nonexpansive, we get that
$$(|x-y| - |Tx-Ty|)^2 \leq |x-y|^2 - |Tx-Ty|^2 < \chi(\eps),$$
and that $0 \leq |x-y| - |Tx-Ty| < \sqrt{\chi(\eps)} \leq \omega(1,\eps)$. Therefore, since $|(x-y) - (Tx-Ty)| \geq \eps$ and $\omega$ is an SNE-modulus for $T$, we must have $|x-y| > 1$. We may assume w.l.o.g. that $y-x>1$.

If we had $Ty-Tx \geq 0$, then we would have $0 \leq (y-x) - (Ty-Tx) < \sqrt{\chi(\eps)} $, so $|(x-y) - (Tx-Ty)| < \sqrt{\chi(\eps)} \leq \eps$, contradicting our assumption. Thus, $Ty-Tx < 0$ and $(y-x) - (Tx-Ty) < \sqrt{\chi(\eps)}$.

Set $z:=x+1$, so $x<z<y$ and $|x-z| + |z-y| = |x-y|$. Then, since $T$ is nonexpansive,
\begin{align*}
0 &\leq |x-z| - |Tx-Tz| \leq |x-z| - |Tx-Tz| + |z-y| - |Tz-Ty| = |x-y| - |Tx-Tz| - |Tz-Ty|\\
&\leq |x-y| - |Tx-Ty| < \sqrt{\chi(\eps)} \leq \omega(1,1/2).
\end{align*}
Therefore, we must have that $|(x-z) - (Tx-Tz)| < 1/2$, i.e. that $|1-(Tz-Tx) < 1/2$, so $Tz \geq Tx$.

On the other hand, we have that $Tz-Ty \leq |Tz-Ty| \leq |z-y| = y-x-1$, so $Tz-(y-x-1) \leq Ty$. Since $Tz \geq Tx$, we have that $Tx - (y-x-1) \leq Ty$, i.e. that $(y-x) - (Tx-Ty) \geq 1$. But we have shown that $(y-x) - (Tx-Ty) < \sqrt{\chi(\eps)} \leq 1$, which gives us the required contradiction.
\end{proof}

\begin{definition}
Let $X$ be a Banach space, $\alpha \in (0,1)$ and $R:X\to X$. We say that $R$ is {\bf $\alpha$-averaged} if there is a nonexpansive mapping $T:X \to X$ such that $R=(1-\alpha)\id_X + \alpha T$.
\end{definition}

In order to show that, in Hilbert spaces, every averaged mapping is super strongly nonexpansive (a fact not explicitly stated in \cite{LiuMouVan22}), we shall need the following identity.

\begin{proposition}[{\cite[Corollary 2.15]{BauCom17}}]\label{h-id}
Let $X$ be a Hilbert space, $a$, $b \in X$ and $\lambda \in (0,1)$. Then
$$\|(1-\lambda)a + \lambda b\|^2 = (1-\lambda)\|a\|^2 + \lambda\|b\|^2 - \lambda(1-\lambda)\|a-b\|^2.$$
\end{proposition}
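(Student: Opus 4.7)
The statement is the convex combination identity $\|(1-\lambda)a+\lambda b\|^2=(1-\lambda)\|a\|^2+\lambda\|b\|^2-\lambda(1-\lambda)\|a-b\|^2$, which is a direct consequence of the inner product structure on $X$. Since the proposition is quoted from Bauschke--Combettes, I would not look for any clever approach; the plan is just a symmetric algebraic verification.

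The first step is to expand the left-hand side using bilinearity of the inner product, obtaining $(1-\lambda)^2\|a\|^2+2\lambda(1-\lambda)\langle a,b\rangle+\lambda^2\|b\|^2$. The second step is to expand the quantity $\|a-b\|^2$ appearing on the right-hand side as $\|a\|^2-2\langle a,b\rangle+\|b\|^2$, substitute, and then collect the coefficients of $\|a\|^2$, $\|b\|^2$ and $\langle a,b\rangle$ separately. The coefficient of $\|a\|^2$ becomes $(1-\lambda)-\lambda(1-\lambda)=(1-\lambda)^2$, the coefficient of $\|b\|^2$ becomes $\lambda-\lambda(1-\lambda)=\lambda^2$, and the coefficient of $\langle a,b\rangle$ becomes $2\lambda(1-\lambda)$, matching the expansion from the first step.

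There is no genuine obstacle: the identity is a one-line computation once the squared norms are rewritten via the inner product, and the argument works verbatim for any real inner product space. An equivalent, slightly more conceptual route would be to observe that the function $\lambda\mapsto(1-\lambda)\|a\|^2+\lambda\|b\|^2-\|(1-\lambda)a+\lambda b\|^2$ is a polynomial of degree two in $\lambda$ that vanishes at $\lambda=0$ and $\lambda=1$, hence equals $c\lambda(1-\lambda)$ for some constant $c$ which is identified by evaluating at $\lambda=1/2$ to be $\|a-b\|^2$; but since the excerpt only needs a written-out identity rather than an insight, the straightforward expansion plan suffices.
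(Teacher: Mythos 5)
Your expansion is correct: writing $\|(1-\lambda)a+\lambda b\|^2$ and $\|a-b\|^2$ in terms of $\|a\|^2$, $\|b\|^2$, $\langle a,b\rangle$ and matching coefficients is the standard argument. Note, though, that the paper itself gives no proof here; it simply cites \cite[Corollary 2.15]{BauCom17}, so there is no in-text argument to compare against, and your direct computation is exactly what one would write out.
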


\begin{proposition}\label{av-ssne}
Let $X$ be a Hilbert space, $\alpha \in (0,1)$ and $R:X \to X$ be $\alpha$-averaged. Then $R$ is super strongly nonexpansive with modulus $\chi(\eps):=\eps^2 \cdot \frac{1-\alpha}{\alpha}$.
\end{proposition}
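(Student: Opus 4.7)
The plan is to apply the parallelogram-style identity of Proposition~\ref{h-id} with $\lambda = \alpha$, $a = x-y$ and $b = Tx - Ty$, where $T$ is the nonexpansive mapping underlying the averaged representation $R = (1-\alpha)\id_X + \alpha T$. Since $Rx - Ry = (1-\alpha)(x-y) + \alpha(Tx-Ty)$, this immediately yields
$$\|Rx-Ry\|^2 = (1-\alpha)\|x-y\|^2 + \alpha\|Tx-Ty\|^2 - \alpha(1-\alpha)\|(x-y)-(Tx-Ty)\|^2.$$

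Next I would invoke nonexpansiveness of $T$ to absorb the $\alpha\|Tx-Ty\|^2$ term into $\alpha\|x-y\|^2$, which gives
$$\|x-y\|^2 - \|Rx-Ry\|^2 \geq \alpha(1-\alpha)\|(x-y) - (Tx-Ty)\|^2.$$
At this point the expression on the right is phrased in terms of $T$, so I would rewrite it in terms of $R$ using the elementary identity $(x-y) - (Rx-Ry) = \alpha[(x-y) - (Tx-Ty)]$, which follows directly from the definition of $R$. Substituting this yields the clean inequality
$$\|x-y\|^2 - \|Rx-Ry\|^2 \geq \frac{1-\alpha}{\alpha}\|(x-y) - (Rx-Ry)\|^2.$$

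Finally I would verify the SSNE-modulus condition in the sense of Proposition~\ref{ssne-eq}(b): if $\|x-y\|^2 - \|Rx-Ry\|^2 < \chi(\eps) = \eps^2 \cdot \frac{1-\alpha}{\alpha}$, then the displayed inequality forces $\|(x-y)-(Rx-Ry)\|^2 < \eps^2$, hence $\|(x-y)-(Rx-Ry)\| < \eps$, as required.

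There is no real obstacle here; the proof is essentially a direct computation once Proposition~\ref{h-id} is in hand. The only step that requires a moment of attention is the passage from the $T$-formulation to the $R$-formulation via the factor $\alpha$, since this is what produces the factor $\frac{1-\alpha}{\alpha}$ (and explains why the modulus blows up as $\alpha \to 1$, consistent with the fact that a plainly nonexpansive mapping need not be super strongly nonexpansive).
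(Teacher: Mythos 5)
Your proof is correct and follows essentially the same route as the paper: both apply Proposition~\ref{h-id} with $a = x-y$, $b = Tx-Ty$, use nonexpansiveness of $T$ to discard the $\alpha(\|Tx-Ty\|^2 - \|x-y\|^2)$ term, and convert from $T$ to $R$ via $(x-y)-(Rx-Ry)=\alpha[(x-y)-(Tx-Ty)]$. The only (cosmetic) difference is that you first isolate the clean inequality $\|x-y\|^2 - \|Rx-Ry\|^2 \geq \frac{1-\alpha}{\alpha}\|(x-y)-(Rx-Ry)\|^2$ before plugging in the modulus, whereas the paper carries the $\eps$-hypothesis through the rearrangement.
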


\begin{proof}
Let $T:X \to X$ be the nonexpansive mapping such that $R=(1-\alpha)\id_X + \alpha T$. Let $\eps>0$ and $x$, $y \in X$ be such that
$$\|x-y\|^2 - \|Rx-Ry\|^2 < \eps^2 \cdot \frac{1-\alpha}{\alpha}.$$
We have that $\|Rx-Ry\| = \|(1-\alpha)(x-y) + \alpha(Tx-Ty)\|$, so, by Proposition~\ref{h-id},
$$\|Rx-Ry\|^2 = (1-\alpha)\|x-y\|^2 + \alpha\|Tx-Ty\|^2 - \alpha(1-\alpha)\|(x-y)-(Tx-Ty)\|^2.$$
We get that
$$\|x-y\|^2 - \eps^2 \cdot \frac{1-\alpha}{\alpha} < (1-\alpha)\|x-y\|^2 + \alpha\|Tx-Ty\|^2 - \alpha(1-\alpha)\|(x-y)-(Tx-Ty)\|^2,$$
so
$$\alpha(1-\alpha)\|(x-y)-(Tx-Ty)\|^2 < \eps^2 \cdot \frac{1-\alpha}{\alpha} + \alpha(\|Tx-Ty\|^2 - \|x-y\|^2) \leq \eps^2 \cdot \frac{1-\alpha}{\alpha}$$
and thus $\|(x-y)-(Tx-Ty)\| < \eps/\alpha$, from which we get that $\|(x-y)-(Rx-Ry)\| =\alpha \|(x-y)-(Tx-Ty)\| < \eps$.
\end{proof}

\begin{definition}[{\cite[Definition 4.2]{LiuMouVan22}}]
Let $X$ be a Banach space, $T: X \to X$ and $K:(0,\infty) \to [0,1)$. We say that $T$ is a {\bf contraction for large distances} with modulus $K$ if for all $\eps>0$ and $x$, $y\in X$ with $\|x-y\| \geq \eps$, $\|Tx-Ty\| \leq K(\eps)\|x-y\|$.
\end{definition}

\begin{remark}
Let $X$ be a Banach space, $T: X \to X$ and $K:(0,\infty) \to [0,1)$, such that $T$ is a contraction for large distances with modulus $K$. Then $T$ is nonexpansive and $-T$ is also a contraction for large distances with modulus $K$.
\end{remark}

The following two propositions provide quantitative versions of implications of \cite[Proposition 5.3]{LiuMouVan22}.

\begin{proposition}\label{cld-ssne}
Let $X$ be a Banach space, $T: X \to X$ and $K:(0,\infty) \to [0,1)$, such that $T$ is a contraction for large distances with modulus $K$. Then $T$ is super strongly nonexpansive with modulus $\chi(\eps):= \left(1-\left(K\left(\frac\eps2\right)\right)^2\right)\left(\frac\eps2\right)^2$.
\end{proposition}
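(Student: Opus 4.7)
The plan is to case-split on the size of $\|x-y\|$ relative to the threshold $\eps/2$, which is precisely the value appearing inside the proposed modulus $\chi$. The intuition is that if $x$ and $y$ are close, the displacement $(x-y)-(Tx-Ty)$ is small simply by the triangle inequality and nonexpansiveness; if they are far apart, the contraction condition forces a strictly positive gap between $\|x-y\|^2$ and $\|Tx-Ty\|^2$, and we arrange this gap to exceed $\chi(\eps)$ so that the hypothesis excludes that case outright.

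More concretely, I would fix $\eps > 0$ and $x$, $y \in X$ with $\|x-y\|^2 - \|Tx-Ty\|^2 < \chi(\eps)$, and first rule out $\|x-y\| \geq \eps/2$: the contraction-for-large-distances condition applied at threshold $\eps/2$ gives $\|Tx-Ty\| \leq K(\eps/2)\|x-y\|$, so squaring and subtracting yields
$$\|x-y\|^2 - \|Tx-Ty\|^2 \geq \bigl(1-(K(\eps/2))^2\bigr)\|x-y\|^2 \geq \bigl(1-(K(\eps/2))^2\bigr)(\eps/2)^2 = \chi(\eps),$$
contradicting the hypothesis. Hence $\|x-y\| < \eps/2$.

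With the large case excluded, I would invoke the preceding remark that any contraction for large distances is nonexpansive, obtaining $\|Tx-Ty\| \leq \|x-y\| < \eps/2$, and then conclude by the triangle inequality: $\|(x-y)-(Tx-Ty)\| \leq \|x-y\|+\|Tx-Ty\| < \eps$, as required by the definition of an SSNE-modulus from Proposition~\ref{ssne-eq}.

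I do not expect a genuine obstacle here: the entire argument is driven by how $\chi$ has been designed. The only real design choice to recognise is the threshold $\eps/2$ — small enough that the triangle inequality in the close-distance case lands exactly at $\eps$, yet large enough to leave a usable factor $(1-(K(\eps/2))^2)$ in the far-distance case — and once this split is fixed, both branches close by direct computation.
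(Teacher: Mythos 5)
Your proof is correct and takes essentially the same approach as the paper: case-split at the threshold $\eps/2$, deriving a contradiction from the contraction-for-large-distances inequality in the far case, and invoking nonexpansiveness plus the triangle inequality in the near case.
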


\begin{proof}
Let $\eps>0$ and $x$, $y \in X$ be such that $\|x-y\|^2 - \|Tx-Ty\|^2 < \chi(\eps)$. Set $\delta:=\eps/2$ and assume that $\|x-y\| \geq \delta$. Then $\|Tx-Ty\| \leq K(\delta)\|x-y\|$, so
$$\|x-y\|^2 - \|Tx-Ty\|^2 \geq (1-(K(\delta))^2)\|x-y\|^2 \geq (1-(K(\delta))^2)\delta^2 = \chi(\eps),$$
a contradiction. Thus, $\|x-y\|<\delta$, so $\|Tx-Ty\|<\delta$ and $\|(x-y)-(Tx-Ty)\| < 2\delta = \eps$.
\end{proof}

\begin{proposition}
Let $X$ be a Banach space and $T:X\to X$ such that $T$ is strongly nonexpansive with modulus $\omega_+$ and $-T$ is strongly nonexpansive with modulus $\omega_-$.

Then, setting $\psi(b,\eps):=\min(\omega_+(b,\eps),\omega_-(b,\eps))$, we have that $T$ is a contraction for large distances with modulus $K(\eps):=\max\left(0,1-\frac{\psi(2\eps,\eps)}{2\eps} \right) \in [0,1)$.
\end{proposition}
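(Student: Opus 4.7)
The plan is to first observe that $\psi$ serves simultaneously as an SNE-modulus for $T$ and for $-T$, since it is the pointwise minimum of two such moduli. Hence, whenever $\|u-v\| \leq b$ and $\|u-v\| - \|Tu-Tv\| < \psi(b,\eps)$, both $\|(u-v)-(Tu-Tv)\| < \eps$ and $\|(u-v)+(Tu-Tv)\| < \eps$ hold, and the triangle inequality then yields $2\|u-v\| < 2\eps$. Specializing to $b := 2\eps$ and contraposing, I obtain the key claim: for any $u, v \in X$ with $\eps \leq \|u-v\| \leq 2\eps$, necessarily $\|u-v\| - \|Tu-Tv\| \geq \psi(2\eps,\eps)$.

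From here I would derive the contraction estimate on the restricted range $\eps \leq \|u-v\| \leq 2\eps$ by a short computation:
\[\|Tu-Tv\| \;\leq\; \|u-v\| - \psi(2\eps,\eps) \;\leq\; \left(1 - \tfrac{\psi(2\eps,\eps)}{2\eps}\right)\|u-v\|,\]
the last inequality using $\|u-v\| \leq 2\eps$. Nonnegativity of $\|Tu-Tv\|$ forces $\psi(2\eps,\eps) \leq 2\eps$ in this regime, so the $\max$ in the definition of $K(\eps)$ is inactive there; and $K(\eps) < 1$ follows from $\psi(2\eps,\eps) > 0$.

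To handle an arbitrary pair $x, y$ with $\|x-y\| > 2\eps$, I would chain along the affine segment from $x$ to $y$. Setting $d := \|x-y\|$ and $n := \lceil d/(2\eps)\rceil$, the points $z_k := x + (k/n)(y-x)$ for $k = 0, \ldots, n$ satisfy $\|z_k - z_{k+1}\| = d/n$, and the definition of $n$ combined with $d > 2\eps$ gives $d/n \in [\eps, 2\eps]$. Applying the previous paragraph to each consecutive pair and summing via the triangle inequality yields
\[\|Tx-Ty\| \;\leq\; \sum_{k=0}^{n-1}\|Tz_k-Tz_{k+1}\| \;\leq\; n \cdot K(\eps) \cdot (d/n) \;=\; K(\eps)\|x-y\|.\]
The main obstacle I anticipate is handling the edge case $\psi(2\eps,\eps) > 2\eps$ correctly: there the key claim becomes vacuous (no $u, v$ with $\eps \leq \|u-v\| \leq 2\eps$ exist), and via the same chaining this rules out any $x, y$ with $\|x-y\| \geq \eps$, so the conclusion holds trivially and the $\max$-truncation to $K(\eps) = 0$ is consistent with this.
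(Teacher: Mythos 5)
Your proof is correct and reaches the same conclusion via a genuinely different mechanism than the paper. The core estimate --- that on the range $\eps \leq \|u-v\| \leq 2\eps$ both SNE-moduli combine via the identity $2(u-v) = [(u-v)-(Tu-Tv)] + [(u-v)+(Tu-Tv)]$ to force $\|u-v\| - \|Tu-Tv\| \geq \psi(2\eps,\eps)$ --- is exactly what the paper uses. Where you diverge is in extending from this critical band to arbitrary $\|x-y\| \geq \eps$: the paper argues by contradiction and descent, taking a minimal $k \in \N$ with a bad pair in $[2^k\eps, 2^{k+1}\eps)$ and bisecting at the midpoint to push $k$ down (the triangle inequality applied to $K(\eps)\|x-z\| + K(\eps)\|z-y\| < \|Tx-Tz\| + \|Tz-Ty\|$ shows one of the halves is again bad), whereas you give a direct chaining proof, subdividing $[x,y]$ into $n = \lceil \|x-y\|/(2\eps)\rceil$ equal pieces each of length in $(\eps, 2\eps]$ and summing $\|Tz_k - Tz_{k+1}\| \leq K(\eps)\,\|z_k - z_{k+1}\|$. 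Your version is a constructive, contradiction-free argument and gives the contraction bound in a single pass; the paper's bisection is slightly shorter to write but is a nonconstructive minimality argument. Both hinge on the same observation that the step length must stay in the band where the two SNE-moduli bite, and both give the same $K(\eps)$. Your parenthetical worry about $\psi(2\eps,\eps) > 2\eps$ is handled adequately: as you note, the key claim together with $\|u-v\| - \|Tu-Tv\| \leq \|u-v\| \leq 2\eps$ shows that in a nontrivial space this case never occurs, so the $\max$-truncation in $K$ is harmless.
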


\begin{proof}
Assume that there are $\eps>0$ and $x$, $y \in X$ such that $\eps \leq \|x-y\|$ and $\|Tx-Ty\| > K(\eps)\|x-y\|$. Let $k \in \N$ be minimal such that there are $\eps>0$ and $x$, $y \in X$ with this property and $2^k\eps \leq \|x-y\| < 2^{k+1}\eps$. Assume that $k>0$. Let $z:=(x+y)/2$, so $x-z=z-y=(x-y)/2$ and $\|x-z\|$, $\|z-y\| \in [2^{k-1}\eps,2^k\eps)$ (taking note that $k-1\geq 0$). Therefore,
$$K(\eps)\|x-z\| + K(\eps)\|z-y\| =K(\eps)\|x-y\| < \|Tx-Ty\| \leq \|Tx-Tz\| + \|Tz-Ty\|,$$
so either $K(\eps)\|x-z\| < \|Tx-Tz\|$ or $K(\eps)\|z-y\| < \|Tz-Ty\|$, contradicting the minimality of $k$. Thus, $k=0$ and $\|x-y\|\leq 2\eps$. We have that
$$\|x-y\| - \|Tx-Ty\| < (1-K(\eps))\|x-y\| \leq (1-K(\eps)) \cdot 2\eps \leq \psi(2\eps,\eps),$$
using at the last step that $K(\eps) \geq 1-\frac{\psi(2\eps,\eps)}{2\eps}$. Now, since $\|x-y\| \leq 2\eps$ and $\|x-y\|- \|Tx-Ty\| < \omega_+(2\eps,\eps)$, we have that $\|(x-y)-(Tx-Ty)\| < \eps$. Similarly, using $-T$ and $\omega_-$, we have that $\|(x-y)+(Tx-Ty)\| <\eps$. We get that $\|2(x-y)\| \leq \|(x-y)-(Tx-Ty)\| + \|(x-y)+(Tx-Ty)\| < 2\eps$, so $\|x-y\| < \eps$, a contradiction.
\end{proof}

In \cite[Proposition 7.2.(i)]{LiuMouVan22}, it is shown that super strongly nonexpansive mappings are closed under compositions. We give below the corresponding modulus for the composition, using some ideas of \cite[Theorem 2.10]{Koh16}.

\begin{proposition}\label{prop-comp}
For any $n \geq 1$ and $\chi_1,\ldots,\chi_n : (0,\infty) \to (0,\infty)$, we define $\chi_{\chi_1,\ldots,\chi_n} : (0,\infty) \to (0,\infty)$, by putting, for any $\eps >0 $,
$$\chi_{\chi_1,\ldots,\chi_n}(\eps) := \min\{\chi_i(\eps/n) \mid 1 \leq i \leq n\}.$$
Let $X$ be a Banach space, $n \geq 1$, $\chi_1,\ldots,\chi_n : (0,\infty) \to (0,\infty)$ and $T_1,\ldots,T_n : X \to X$ be super strongly nonexpansive with moduli $\chi_1,\ldots,\chi_n$, respectively. Then $T:=T_n\circ\ldots \circ T_1$ is super strongly nonexpansive with modulus $\chi_{\chi_1,\ldots,\chi_n}$.
\end{proposition}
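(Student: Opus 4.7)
The plan is to use a telescoping argument on the squared norms, exploiting the fact that each $T_i$ is nonexpansive so that the intermediate squared distances form a non-increasing sequence.

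First, given $\eps > 0$ and $x, y \in X$ with $\|x-y\|^2 - \|Tx-Ty\|^2 < \chi_{\chi_1,\ldots,\chi_n}(\eps)$, I would introduce the orbits $x_0 := x$, $y_0 := y$ and $x_i := T_i(x_{i-1})$, $y_i := T_i(y_{i-1})$ for $i = 1, \ldots, n$, so that $x_n = Tx$ and $y_n = Ty$. Since each $T_i$ is super strongly nonexpansive, it is in particular nonexpansive, so $\|x_i - y_i\|^2 \leq \|x_{i-1} - y_{i-1}\|^2$ for each $i$. Therefore the telescoping identity
\[
\|x-y\|^2 - \|Tx-Ty\|^2 = \sum_{i=1}^n \bigl(\|x_{i-1} - y_{i-1}\|^2 - \|x_i - y_i\|^2\bigr)
\]
expresses our hypothesis as a sum of non-negative terms bounded above by $\chi_{\chi_1,\ldots,\chi_n}(\eps) \leq \chi_i(\eps/n)$ for each $i$.

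Next, since each individual term in the telescoping sum is non-negative and the whole sum is less than $\chi_i(\eps/n)$, we conclude that for every $i \in \{1,\ldots,n\}$,
\[
\|x_{i-1} - y_{i-1}\|^2 - \|x_i - y_i\|^2 < \chi_i(\eps/n).
\]
Applying the SSNE-modulus $\chi_i$ for $T_i$ at the pair $(x_{i-1}, y_{i-1})$ then yields $\|(x_{i-1} - y_{i-1}) - (x_i - y_i)\| < \eps/n$.

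Finally, I would combine these bounds via the triangle inequality on the obvious telescoping decomposition
\[
(x-y) - (Tx-Ty) = \sum_{i=1}^n \bigl((x_{i-1} - y_{i-1}) - (x_i - y_i)\bigr),
\]
which gives $\|(x-y) - (Tx-Ty)\| < n \cdot \eps/n = \eps$, as desired. There is no real obstacle here; the only thing to be careful about is ensuring one uses nonexpansiveness of each $T_i$ to guarantee the telescoping terms are non-negative, which is what licenses the termwise comparison to $\chi_i(\eps/n)$.
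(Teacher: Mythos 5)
Your proposal is correct and is essentially identical to the paper's proof: the same telescoping decomposition of squared norms, the same use of nonexpansiveness to make each summand nonnegative so that each is individually bounded by the total, and the same final triangle inequality over the telescoping decomposition of $(x-y)-(Tx-Ty)$.
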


\begin{proof}
Let $\eps>0$ and $x$, $y \in X$ be such that $\|x-y\|^2 - \|Tx-Ty\|^2 < \chi_{\chi_1,\ldots,\chi_n}(\eps)$, i.e.
$$\sum_{i=1}^n \left(\left\|(T_{i-1} \circ \ldots\circ T_1)x - (T_{i-1} \circ \ldots \circ T_1)y\right\|^2 -  \left\|(T_i \circ \ldots\circ T_1)x - (T_i \circ \ldots \circ T_1)y\right\|^2\right) <\chi_{\chi_1,\ldots,\chi_n}(\eps).$$
For each $i \in \{1,\ldots,n\}$, since $\chi_{\chi_1,\ldots,\chi_n}(\eps) \leq \chi_i(\eps/n)$, we get that
$$0 \leq \left\|(T_{i-1} \circ \ldots\circ T_1)x - (T_{i-1} \circ \ldots \circ T_1)y\right\|^2 -  \left\|(T_i \circ \ldots\circ T_1)x - (T_i \circ \ldots \circ T_1)y\right\|^2 < \chi_i(\eps/n),$$
so
$$\|((T_{i-1} \circ \ldots\circ T_1)x - (T_{i-1} \circ \ldots \circ T_1)y) - ((T_i \circ \ldots\circ T_1)x - (T_i \circ \ldots \circ T_1)y)\| < \eps/n.$$
Thus, we derive that $\|(x-y)-(Tx-Ty)\| < \eps$.
\end{proof}

\section{Uniform monotonicity}\label{sec:um}

From now on, we shall fix a Hilbert space $X$. The following definitions are standard in convex optimization.

\begin{definition}[{see, e.g., \cite[Definition 4.1.(i)]{BauCom17}}]
A mapping $U:X\to X$ is called {\bf firmly nonexpansive} if, for all $x$, $y \in X$, $\|Ux-Uy\|^2 \leq \langle x-y,Ux-Uy\rangle$.
\end{definition}

It is known (see \cite[Proposition 4.4]{BauCom17}) that a mapping $U: X\to X$ is $(1/2)$-averaged iff it is firmly nonexpansive. We thus get a bijective correspondence between firmly nonexpansive and plainly nonexpansive operators given by $U \mapsto 2U - \id_X$.

\begin{definition}[{see, e.g., \cite[Definitions 20.1 and 20.20]{BauCom17}}]
A set-valued operator $A \subseteq X \times X$ is called {\bf monotone} if, for any $(a,b)$, $(c,d) \in A$, $\langle a-c,b-d \rangle \geq 0$; it is {\bf maximally monotone} (or {\bf maximal monotone}) if it is maximal among monotone operators as ordered by inclusion.
\end{definition}

It is obvious that if an operator $A$ is (maximally) monotone, then $A^{-1}$ is also (maximally) monotone.

\begin{definition}[{see, e.g., \cite[Section 23.2]{BauCom17}}]
If $A \subseteq X \times X$ is maximally monotone, then $(\id_X + A)^{-1}$ is a firmly nonexpansive single-valued mapping on $X$ which is denoted by $J_A$ and called the {\bf resolvent} of $A$. We define the {\bf reflected resolvent} of $A$ as $R_A:=2J_A - \id_X$.
\end{definition}

The association $A \mapsto J_A$ is bijective, by \cite[Propositions 23.8 and 23.10]{BauCom17}, and if we compose it with the previous bijection, we get that the association $A \mapsto R_A$ is bijective. In addition, we have that
\begin{equation}\label{ara}
A = \left\{ \left(\frac{x+R_Ax}2,\frac{x-R_Ax}2\right) \middle|\ x \in X \right\}.
\end{equation}
We also have that, for all $x \in X$, $x=J_Ax+J_{A^{-1}}x$, so, for all $x$, $y \in X$,
\begin{equation}\label{p1}
\langle x-y,J_Ax-J_Ay \rangle = \langle J_{A^{-1}}x - J_{A^{-1}}y, J_Ax - J_Ay \rangle + \|J_Ax - J_Ay\|^2.
\end{equation}

\begin{definition}[{see, e.g., \cite[Definition 22.1.(iii)]{BauCom17}}]\label{d-um}
We say that a set-valued operator $A \subseteq X \times X$ is {\bf uniformly monotone} if there is a function $\vp: [0,\infty) \to [0,\infty]$ which is nondecreasing and vanishes only at $0$ such that for all $(a,b)$, $(c,d) \in A$, $\langle a-c,b-d\rangle \geq \vp(\|a-c\|)$.
\end{definition}

The following technical result shall help us in our quantitative treatment of the class of uniformly monotone operators.

\begin{proposition}\label{prop-m}
For any set $M$, and any $A: M \to [0,\infty)$ and $B:M \to \R$, define the function $\vp_{(M,A,B)} : [0,\infty) \to [-\infty,\infty]$ by setting, for any $\eps \geq 0$,
$$ \vp_{(M,A,B)}(\eps) := \begin{cases} \inf\{B(y) \mid y \in M, A(y) \geq \eps \}, &\mbox{if  $\eps>0$,} \\ 
\min\left(0,\inf\{B(y) \mid y \in M\}\right), & \mbox{if $\eps=0$}. \end{cases}$$
Let $M$ be a set, $A: M \to [0,\infty)$ and $B:M \to \R$. Then:
\begin{enumerate}[(i)]
\item \begin{enumerate}[(a)]
\item $\vp_{(M,A,B)}$ is nondecreasing;
\item for all $x \in M$, $\vp_{(M,A,B)}(A(x)) \leq B(x)$;
\end{enumerate}
\item TFAE:
\begin{enumerate}[(a)]
\item $\vp_{(M,A,B)}(0)=0$ and, for all $\eps>0$, $\vp_{(M,A,B)}(\eps)>0$;
\item there is a function $\vp: [0,\infty) \to [0,\infty]$ which is nondecreasing and vanishes only at $0$ such that, for all $x \in M$, $\vp(A(x)) \leq B(x)$;
\item there is a function $\psi : (0,\infty) \to (0,\infty)$ such that, for all $x \in M$ and all $\eps>0$ with $A(x) \geq \eps$, we have that $\psi(\eps) \leq B(x)$.
\end{enumerate}
In this case, we shall say that the triple $(M,A,B)$ is {\bf adequate}, we shall call a function $\vp$ as above a {\bf classical modulus} for it and a function $\psi$ as above simply a {\bf modulus} for it.
\end{enumerate}
\end{proposition}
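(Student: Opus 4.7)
My strategy is to first verify (i) by direct case analysis separating the arguments $0$ and $\eps > 0$, then prove the three-way equivalence in (ii) via a cyclic chain $(a) \Rightarrow (b) \Rightarrow (c) \Rightarrow (a)$, with $\vp_{(M,A,B)}$ itself playing the role of the natural candidate for the classical modulus throughout.

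For (i)(a), I split the comparison $\vp_{(M,A,B)}(\eps_1) \leq \vp_{(M,A,B)}(\eps_2)$ into two cases: when $\eps_1 > 0$, the inclusion $\{y : A(y) \geq \eps_2\} \subseteq \{y : A(y) \geq \eps_1\}$ flips for infima; when $\eps_1 = 0$, I combine $\vp_{(M,A,B)}(0) \leq 0$ with the trivial chain $\vp_{(M,A,B)}(\eps_2) \geq \inf_{y \in M} B(y) \geq \vp_{(M,A,B)}(0)$. For (i)(b), the case $A(x) > 0$ is immediate since $x$ lies in the indexing set; the case $A(x) = 0$ exploits exactly the $\min(0, \cdot)$ in the definition, which forces $\vp_{(M,A,B)}(0) \leq \inf_y B(y) \leq B(x)$.

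Turning to (ii), for $(a) \Rightarrow (b)$ I would take $\vp := \vp_{(M,A,B)}$: (i)(a) gives monotonicity, the hypotheses of (a) place the range in $[0, \infty]$ and make it vanish only at $0$, and (i)(b) supplies $\vp(A(x)) \leq B(x)$. For $(b) \Rightarrow (c)$, I set $\psi(\eps) := \min(\vp(\eps), 1)$, which stays in $(0, \infty)$ since $\vp(\eps) > 0$ whenever $\eps > 0$ and the truncation absorbs possible values $+\infty$; monotonicity of $\vp$ then chains $\psi(\eps) \leq \vp(\eps) \leq \vp(A(x)) \leq B(x)$ for $x$ with $A(x) \geq \eps$.

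The last link $(c) \Rightarrow (a)$ is where I expect the main obstacle. The bound $\vp_{(M,A,B)}(\eps) \geq \psi(\eps) > 0$ for $\eps > 0$ (with the convention $\inf \emptyset = +\infty$) is routine, but $\vp_{(M,A,B)}(0) = 0$ amounts to showing $\inf_{y \in M} B(y) \geq 0$: for $y$ with $A(y) > 0$ this follows by applying (c) at $\eps := A(y)$ to obtain $B(y) \geq \psi(A(y)) > 0$, and the delicate step is then reconciling the condition at $\eps = 0$ — where (c) itself is vacuous — with the $\min(0, \cdot)$ convention built into the definition of $\vp_{(M,A,B)}(0)$, which is precisely what governs how points with $A(y) = 0$ enter the infimum.
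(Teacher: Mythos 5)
Your treatment of (i) and of $(a)\Rightarrow(b)$ matches the paper (which dismisses (i) as obvious and takes $\vp := \vp_{(M,A,B)}$ for $(a)\Rightarrow(b)$). Your $(b)\Rightarrow(c)$ construction $\psi(\eps):=\min(\vp(\eps),1)$ is a mild variant of the paper's, which instead defines $\psi(\eps):=\vp(\eps)$ when $\{x\in M: A(x)\geq\eps\}\neq\emptyset$ and $\psi(\eps):=1$ otherwise, and then separately argues that $\vp(\eps)<\infty$ in the first case; your truncation absorbs the $+\infty$ issue automatically and is arguably cleaner.

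The step you flag in $(c)\Rightarrow(a)$ is not something to be reconciled away -- it is a genuine gap, and the implication $(c)\Rightarrow(a)$ is in fact false as literally stated. Condition (c) quantifies only over $\eps>0$ and therefore imposes no constraint on $B(y)$ at points $y$ with $A(y)=0$. Concretely, take $M=\{*\}$, $A(*)=0$, $B(*)=-1$: every $\psi:(0,\infty)\to(0,\infty)$ satisfies (c) vacuously, yet $\vp_{(M,A,B)}(0)=\min(0,-1)=-1\neq 0$, so (a) fails, and likewise (b) fails since it would force $0=\vp(0)\leq B(*)=-1$. The paper's own proof of this implication asserts ``for all $x\in M$, $\psi(A(x))\leq B(x)$, so $B(x)\geq 0$,'' which silently evaluates $\psi$ at $A(x)=0$, where $\psi$ is not defined and where (c) yields nothing. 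What is really needed, and what holds implicitly in every application the paper makes of this proposition (there $B$ is always of the form $\langle a-c,b-d\rangle$ for a monotone operator, hence nonnegative), is the extra hypothesis $\inf_{y\in M}B(y)\geq 0$; with it the $\min(0,\cdot)$ collapses to $0$ and the argument closes. Your hesitation at this point was well-placed: you were not missing an easy observation, but rather catching an unstated assumption.
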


\begin{proof}
\begin{enumerate}[(i)]
\item Obvious.
\item For `$(a)\Rightarrow(b)$', just take $\vp:=\vp_{(M,A,B)}$.

For `$(b)\Rightarrow(c)$', define $\psi : (0,\infty) \to (0,\infty)$ by setting, for any $\eps>0$,
$$ \psi(\eps) := \begin{cases} \vp(\eps), &\mbox{if  there is an $x\in M$ with $A(x)\geq\eps$,} \\ 
1, & \mbox{otherwise}. \end{cases}$$

Now, for any $x \in M$ and $\eps >0$ with $A(x) \geq \eps$, we have that  $\psi(\eps) = \vp(\eps) \leq \vp(A(x)) \leq B(x)$. In particular, this argument also shows that $\psi$ is well-defined, i.e. that for any $\eps>0$ such that there is an $x \in M$ with $A(x) \geq \eps$, $\vp(\eps)\neq\infty$.

We now show `$(c)\Rightarrow(a)$'. The fact that $\vp_{(M,A,B)}(0)=0$ follows from the fact that for all $x \in M$, $\psi(A(x)) \leq B(x)$, so $B(x) \geq 0$. Now, let $\eps>0$ and assume, towards a contradiction, that $\vp_{(M,A,B)}(\eps)=0$ (taking into account that $\vp_{(M,A,B)}$ is nondecreasing). Then there is an $x \in M$ with $A(x) \geq \eps$ and $B(x) \leq \psi(\eps)/2$, so $\psi(\eps) \leq B(x) \leq \psi(\eps)/2$, a contradiction.
\end{enumerate}
\end{proof}

Let $A \se X \times X$. Recalling Definition~\ref{d-um}, $A$ is uniformly monotone if there is a function $\vp: [0,\infty) \to [0,\infty]$ which is nondecreasing and vanishes only at $0$ such that for all $(a,b)$, $(c,d) \in A$, $\langle a-c,b-d\rangle \geq \vp(\|a-c\|)$. Now, using the above proposition, we see that $A$ is uniformly monotone iff there is a $\psi : (0,\infty) \to (0,\infty)$ such that for all $\eps>0$ and all $(a,b)$, $(c,d) \in A$ with $\|a-c\| \geq \eps$, we have that $\psi(\eps) \leq\langle a-c,b-d\rangle$. We shall also use in this particular case the terminology of `classical modulus' and `modulus', respectively.

\begin{definition}
We say that a set-valued operator $A \subseteq X \times X$ is {\bf inverse uniformly monotone} with modulus $\psi$ (resp. with classical modulus $\vp$) if $A^{-1}$ is uniformly monotone with the same modulus $\psi$ (resp. with the same classical modulus $\vp$). 
\end{definition}

An inverse uniformly monotone operator $A$ is necessarily a single-valued mapping on the whole of $X$, since on one hand single-valuedness is trivially implied by the definition, whereas on the other hand, since $A^{-1}$ is uniformly monotone, it is -- by \cite[Theorem 4.5]{LiuMouVan22} -- surjective, yielding that $A$ has full domain. Therefore, such a single-valued mapping $A:X \to X$ is inverse uniformly monotone with modulus $\psi$ if and only if, for all $\eps>0$ and $x$, $y \in X$ with $\|Ax-Ay\| \leq \eps$, $\langle x-y,Ax-Ay \rangle \geq \psi(\eps)$.

Let $A \se X \times X$ be maximally monotone. By \cite[Proposition 3.5]{LiuMouVan22}, $A$ is inverse uniformly monotone iff $R_A$ is super strongly nonexpansive. The following propositions will make this correspondence quantitatively explicit.

\begin{proposition}\label{ium-ssne}
Let $A$ be a maximally monotone operator on $X$ which is inverse uniformly monotone with modulus $\psi$. Then $R_A$ is super strongly nonexpansive with modulus $\chi(\eps):=4\psi(\eps/2)$.
\end{proposition}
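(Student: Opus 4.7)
The plan is to exploit the explicit parametrization of $A$ by its reflected resolvent given by equation~\eqref{ara}, and then translate the inverse uniform monotonicity hypothesis directly into the required modulus inequality for $R_A$ via the polarization identity.

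First, I would fix arbitrary $x$, $y \in X$ and define
\[
u := \frac{x + R_A x}{2},\quad v := \frac{x - R_A x}{2},\quad w := \frac{y + R_A y}{2},\quad z := \frac{y - R_A y}{2}.
\]
By \eqref{ara}, $(u,v),(w,z) \in A$, equivalently $(v,u),(z,w) \in A^{-1}$. A direct computation gives
\[
u - w = \tfrac{1}{2}\bigl((x-y) + (R_A x - R_A y)\bigr),\qquad v - z = \tfrac{1}{2}\bigl((x-y) - (R_A x - R_A y)\bigr),
\]
and therefore, by the polarization/difference-of-squares identity,
\[
\|x-y\|^2 - \|R_A x - R_A y\|^2 = \bigl\langle (x-y) + (R_A x - R_A y),\,(x-y) - (R_A x - R_A y)\bigr\rangle = 4\,\langle u-w,\,v-z\rangle.
\]

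Next, I would establish the contrapositive of the desired implication. Fix $\eps > 0$ and suppose, towards a contradiction, that $\|x-y\|^2 - \|R_A x - R_A y\|^2 < 4\psi(\eps/2)$ while $\|(x-y) - (R_A x - R_A y)\| \geq \eps$. The latter says $\|v-z\| \geq \eps/2$. Since $A$ is inverse uniformly monotone with modulus $\psi$, and $(v,u),(z,w) \in A^{-1}$ with $\|v-z\| \geq \eps/2$, the modulus condition yields $\langle v-z, u-w\rangle \geq \psi(\eps/2)$. Combining with the identity above gives $\|x-y\|^2 - \|R_A x - R_A y\|^2 \geq 4\psi(\eps/2)$, contradicting the assumption. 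Thus $\chi(\eps) := 4\psi(\eps/2)$ is an SSNE-modulus for $R_A$ in the sense of Proposition~\ref{ssne-eq}(b), and by the implication $(b) \Rightarrow (a)$ of that proposition (whose proof also automatically yields nonexpansiveness), $R_A$ is super strongly nonexpansive with modulus $\chi$.

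There is no real obstacle here; the only subtlety is bookkeeping the direction of the inverse monotonicity, namely that uniform monotonicity of $A^{-1}$ bounds the inner product $\langle v-z, u-w\rangle$ in terms of $\|v-z\|$ (the first coordinate in $A^{-1}$, which is the second coordinate in $A$), matching exactly the $v-z$ factor that appears through the difference-of-squares identity. Everything else is a one-line calculation together with an application of \eqref{ara}.
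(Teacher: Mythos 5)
Your proof is correct and follows essentially the same route as the paper: use \eqref{ara} to identify the relevant pairs in $A$ (equivalently $A^{-1}$), apply the difference-of-squares identity to rewrite $\|x-y\|^2 - \|R_Ax - R_Ay\|^2$ as four times the inner product, and derive a contradiction from the modulus inequality for $A^{-1}$. The explicit labels $u,v,w,z$ and the remark about nonexpansiveness via Proposition~\ref{ssne-eq} are cosmetic additions; the mathematical content matches the paper's argument exactly.
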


\begin{proof}
Let $\eps>0$ and $x$, $y \in X$ with $\|x-y\|^2 - \|R_Ax- R_Ay\|^2 < \chi(\eps)$. Assume that $\|(x-y)-(R_Ax-R_Ay)\| \geq \eps$, so
$$\left\|\frac{x-R_Ax}2 - \frac{y-R_Ay}2 \right\| \geq \frac\eps2.$$
Using \eqref{ara}, we get that
$$\left\langle \frac{x+R_Ax}2 - \frac{y+R_Ay}2, \frac{x-R_Ax}2 - \frac{y-R_Ay}2 \right\rangle\geq \psi(\eps/2),$$
so
$$\|x-y\|^2 - \|R_Ax- R_Ay\|^2 = \langle (x-y) + (R_Ax-R_Ay),(x-y) - (R_Ax-R_Ay) \rangle\geq 4\psi(\eps/2) = \chi(\eps),$$
a contradiction.
\end{proof}

\begin{proposition}\label{ssne-ium}
For any $\chi : (0,\infty) \to (0,\infty)$ and any $\eps>0$, put
$$\psi_\chi(\eps):=\frac{\chi(2\eps)}4.$$
Let $\chi : (0,\infty) \to (0,\infty)$ and $A$ be a maximally monotone operator on $X$ such that $R_A$ is super strongly nonexpansive with modulus $\chi$. Then $A$ is inverse uniformly monotone with modulus $\psi_\chi$.
\end{proposition}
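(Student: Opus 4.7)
The plan is to essentially invert the calculation carried out in the proof of Proposition~\ref{ium-ssne}, exploiting the fact that \eqref{ara} gives a complete parametrization of the graph of $A$ via the reflected resolvent. Using the characterization of inverse uniform monotonicity through a modulus (as derived from Proposition~\ref{prop-m} and noted in the paragraph following it for the specific case of maximally monotone operators), what we must show is: for every $\eps > 0$ and every pair $(a,b), (c,d) \in A$ with $\|b-d\| \geq \eps$, we have $\langle a-c, b-d\rangle \geq \psi_\chi(\eps) = \chi(2\eps)/4$.

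Fix such $(a,b), (c,d) \in A$ and $\eps > 0$. By \eqref{ara}, there exist $x, y \in X$ with
\[
a = \frac{x + R_Ax}{2},\quad b = \frac{x - R_Ax}{2},\quad c = \frac{y + R_Ay}{2},\quad d = \frac{y - R_Ay}{2},
\]
so that $b - d = \tfrac{1}{2}\bigl((x-y) - (R_Ax - R_Ay)\bigr)$ and $a - c = \tfrac{1}{2}\bigl((x-y) + (R_Ax - R_Ay)\bigr)$. The hypothesis $\|b-d\| \geq \eps$ thus translates into $\|(x-y) - (R_Ax - R_Ay)\| \geq 2\eps$.

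Next, I would apply the contrapositive of the defining property of an SSNE-modulus, instantiated at $2\eps$: since $\|(x-y)-(R_Ax-R_Ay)\| \geq 2\eps$, we must have $\|x-y\|^2 - \|R_Ax - R_Ay\|^2 \geq \chi(2\eps)$. Finally, expanding the inner product via the standard identity $\langle u+v, u-v\rangle = \|u\|^2 - \|v\|^2$ gives
\[
\langle a-c,\, b-d\rangle = \tfrac{1}{4}\bigl(\|x-y\|^2 - \|R_Ax - R_Ay\|^2\bigr) \geq \tfrac{1}{4}\chi(2\eps) = \psi_\chi(\eps),
\]
which is exactly what is required. The main (mild) point to watch is the bookkeeping of the factors $1/2$ introduced by \eqref{ara} in both the norm on the $b-d$ side and the inner product on the $a-c$, $b-d$ side, which together explain the factors $2$ inside and $4$ in the denominator appearing in the definition of $\psi_\chi$; no genuine obstacle arises, as this proposition is formally dual to Proposition~\ref{ium-ssne}.
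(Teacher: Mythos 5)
Your proof is correct and is essentially the paper's proof: both use the parametrization of the graph of $A$ via \eqref{ara}, the factor-of-$2$ bookkeeping relating $b-d$ to $(x-y)-(R_Ax-R_Ay)$, the SSNE-modulus property at $2\eps$, and the identity $\langle u+v,u-v\rangle=\|u\|^2-\|v\|^2$. The only cosmetic difference is that you argue directly via the contrapositive of the modulus property, while the paper phrases it as a proof by contradiction.
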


\begin{proof}
Let $\eps>0$ and $(a,b)$, $(c,d) \in A$ with $\|b-d\| \geq \eps$. Assume that $\langle a-c,b-d \rangle < \psi(\eps)$. Using \eqref{ara}, we get that there are $x$, $y \in X$ such that
$$a=\frac{x+R_Ax}2,\ b=\frac{x-R_Ax}2,\ c=\frac{y+R_Ay}2,\ d=\frac{y-R_Ay}2,$$
so
$$\left\langle \frac{x+R_Ax}2 - \frac{y+R_Ay}2, \frac{x-R_Ax}2 - \frac{y-R_Ay}2 \right\rangle< \psi(\eps)$$
and
$$\|x-y\|^2 - \|R_Ax- R_Ay\|^2 = \langle (x-y) + (R_Ax-R_Ay),(x-y) - (R_Ax-R_Ay)\rangle < 4\psi(\eps) = \chi(2\eps).$$
We thus get that $ \|(x-y)-(R_Ax-R_Ay)\| < 2\eps$, but, since $(x-y)-(R_Ax-R_Ay) = 2(b-d)$, we have that $\|b-d\|<\eps$, a contradiction.
\end{proof}

We notice that the associations of moduli in the above two propositions are mutually inverse.

By \cite[Theorem 4.5]{LiuMouVan22}, we also have that every operator $A: X \to X$ which is maximally monotone and inverse uniformly monotone is uniformly continuous. In the sequel, we shall extract from the proof a modulus of uniform continuity. The following proposition is a quantitative version of \cite[Lemma 4.4.(i)]{LiuMouVan22}.

\begin{proposition}\label{p2}
Define, for any suitable $\psi$, $\eps$,
$$\alpha_\psi(\eps):=\min\left(\psi(\eps/2),\eps^2/4\right).$$
Let $\psi:(0,\infty) \to (0,\infty)$ and $A$ be a maximally monotone operator on $X$ which is inverse uniformly monotone with modulus $\psi$. Then $J_A$ is uniformly monotone with modulus $\alpha_\psi$.
\end{proposition}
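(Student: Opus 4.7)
The plan is to combine identity \eqref{p1} with the triangle inequality, exploiting the decomposition $\id_X = J_A + J_{A^{-1}}$ via a two-case argument that accounts for the two summands in $\alpha_\psi(\eps)$.

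First, I fix $\eps > 0$ and $x, y \in X$ with $\|x-y\| \geq \eps$, and I aim to show that $\langle x-y, J_Ax - J_Ay\rangle \geq \alpha_\psi(\eps)$. The key observation is that applying $\id_X = J_A + J_{A^{-1}}$ to both $x$ and $y$ and subtracting yields $x - y = (J_Ax - J_Ay) + (J_{A^{-1}}x - J_{A^{-1}}y)$, so by the triangle inequality at least one of $\|J_Ax - J_Ay\|$ and $\|J_{A^{-1}}x - J_{A^{-1}}y\|$ is $\geq \eps/2$. This is the natural split that produces the two terms of $\min(\psi(\eps/2), \eps^2/4)$.

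In the first case, where $\|J_Ax - J_Ay\| \geq \eps/2$, I would invoke \eqref{p1} together with the monotonicity of $A^{-1}$ (which ensures $\langle J_{A^{-1}}x - J_{A^{-1}}y, J_Ax - J_Ay\rangle \geq 0$) to conclude
$$\langle x-y, J_Ax - J_Ay\rangle \geq \|J_Ax - J_Ay\|^2 \geq \eps^2/4.$$
In the second case, $\|J_{A^{-1}}x - J_{A^{-1}}y\| \geq \eps/2$. Since $x \in J_Ax + A(J_Ax)$ forces $(J_Ax, J_{A^{-1}}x) \in A$, i.e.\ $(J_{A^{-1}}x, J_Ax) \in A^{-1}$, and similarly for $y$, the inverse uniform monotonicity of $A$ (that is, the uniform monotonicity of $A^{-1}$ with modulus $\psi$) gives
$$\langle J_{A^{-1}}x - J_{A^{-1}}y, J_Ax - J_Ay\rangle \geq \psi(\eps/2).$$
Combined with \eqref{p1} and the nonnegativity of $\|J_Ax - J_Ay\|^2$, this yields $\langle x-y, J_Ax - J_Ay\rangle \geq \psi(\eps/2)$.

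The two cases together give $\langle x-y, J_Ax - J_Ay\rangle \geq \alpha_\psi(\eps)$, which is precisely the required modulus of uniform monotonicity of $J_A$. I do not expect any serious obstacle: the only care needed is to verify the membership $(J_{A^{-1}}x, J_Ax) \in A^{-1}$ from the resolvent identity, and to match the conventions of Definition~\ref{d-um} and the modulus formulation described after Proposition~\ref{prop-m} (where the hypothesis is on $\|x-y\|$, the norm of the difference of first coordinates).
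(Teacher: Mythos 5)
Your proposal is correct and follows essentially the same route as the paper's proof: the case split on whether $\|J_Ax - J_Ay\| \geq \eps/2$ or $\|J_{A^{-1}}x - J_{A^{-1}}y\| \geq \eps/2$ (via $x - y = (J_Ax - J_Ay) + (J_{A^{-1}}x - J_{A^{-1}}y)$), then applying \eqref{p1} together with monotonicity in the first case and inverse uniform monotonicity with modulus $\psi$ in the second. The paper phrases the split by contradiction rather than directly via the triangle inequality, but this is a cosmetic difference.
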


\begin{proof}
Let $\eps>0$ and $x$, $y \in X$ with $\|x-y\| \geq \eps$. By \eqref{p1}, we have that
$$\langle x-y,J_Ax-J_Ay \rangle = \langle J_{A^{-1}}x - J_{A^{-1}}y, J_Ax - J_Ay \rangle + \|J_Ax - J_Ay\|^2.$$

If we had $\|J_Ax-J_Ay\| < \eps/2$ and $\|J_{A^{-1}}x-J_{A^{-1}}y\| < \eps/2$, then, since $x = J_Ax + J_{A^{-1}}x$ and $y = J_Ay + J_{A^{-1}}y$, we would have $\|x-y\|<\eps$, a contradiction. Thus, either $\|J_Ax-J_Ay\| \geq \eps/2$ or $\|J_{A^{-1}}x-J_{A^{-1}}y\| \geq \eps/2$.

If $\|J_Ax-J_Ay\| \geq \eps/2$, then, since $(J_Ax,J_{A^{-1}}x)$, $(J_Ay,J_{A^{-1}}y) \in A$ and $A$ is monotone, we have that $\langle J_{A^{-1}}x - J_{A^{-1}}y, J_Ax - J_Ay \rangle \geq 0$, so
$$\langle x-y,J_Ax-J_Ay \rangle \geq 0 + \eps^2/4 \geq \alpha_\psi(\eps).$$
If $\|J_{A^{-1}}x-J_{A^{-1}}y\| \geq \eps/2$, then, since $(J_Ax,J_{A^{-1}}x)$, $(J_Ay,J_{A^{-1}}y) \in A$ and $A$ is inverse uniformly monotone with modulus $\psi$, we have that $\langle J_{A^{-1}}x - J_{A^{-1}}y, J_Ax - J_Ay \rangle \geq \psi(\eps/2)$, so
$$\langle x-y,J_Ax-J_Ay \rangle \geq \psi(\eps/2)+0 \geq \alpha_\psi(\eps).$$
The proof is now finished.
\end{proof}

The following proposition is a quantitative version of \cite[Proposition 4.1]{LiuMouVan22}.

\begin{proposition}\label{p3}
Define, for any suitable $\alpha$, $\eps$,
$$\beta_\alpha(\eps) := \begin{cases} \min\left(\alpha(1)/4,\alpha(\eps)\right), &\mbox{if  $\eps<1$,} \\ 
\alpha(1)/4, & \mbox{if $\eps\geq1$}. \end{cases}$$
Let $\alpha:(0,\infty) \to (0,\infty)$ and  $B : X\to X$ be uniformly monotone with modulus $\alpha$. Then, for all $\eps>0$ and $x$, $y \in X$ with $\|x-y\|\geq\eps$, we have that $\langle x-y,Bx-By \rangle \geq \beta_\alpha(\eps)\|x-y\|^2$.
\end{proposition}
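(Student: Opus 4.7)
The approach is to split into two cases according to whether $\|x-y\| \leq 1$ or $\|x-y\| > 1$, matching the two clauses in the definition of $\beta_\alpha$. In the small case the quantity $\|x-y\|^2$ is bounded above by $1$, so a constant lower bound on $\langle x-y, Bx-By\rangle$ already suffices; in the large case we need a quadratic-in-$\|x-y\|$ lower bound, which I would obtain via a telescoping subdivision of the segment from $x$ to $y$.

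In the regime $\|x-y\| \leq 1$, we have $\beta_\alpha(\eps)\|x-y\|^2 \leq \beta_\alpha(\eps)$, so it is enough to bound $\langle x-y, Bx-By\rangle$ below by $\beta_\alpha(\eps)$. If $\eps < 1$, the definition of modulus applied at threshold $\eps$ gives $\langle x-y, Bx-By\rangle \geq \alpha(\eps) \geq \beta_\alpha(\eps)$ immediately. If $\eps \geq 1$, then $\|x-y\| \geq \eps \geq 1$ combined with $\|x-y\| \leq 1$ forces $\|x-y\|=1$, and applying the modulus at threshold $1$ yields $\langle x-y, Bx-By\rangle \geq \alpha(1) \geq \alpha(1)/4 = \beta_\alpha(\eps)$.

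For the regime $\|x-y\| > 1$, the core step is the telescoping subdivision. Set $L := \|x-y\|$ and $n := \lfloor L \rfloor \geq 1$, and define $z_k := x + (k/n)(y-x)$ for $0 \leq k \leq n$, so that $z_0 = x$, $z_n = y$, and $\|z_{k+1}-z_k\| = L/n \geq 1$. Uniform monotonicity applied at threshold $1$ gives $\langle z_{k+1}-z_k,\ Bz_{k+1}-Bz_k\rangle \geq \alpha(1)$ for each $k$. Since $y-x = n(z_{k+1}-z_k)$, this rescales to $\langle y-x,\ Bz_{k+1}-Bz_k\rangle \geq n\alpha(1)$, and telescoping $\sum_{k=0}^{n-1}(Bz_{k+1}-Bz_k) = By-Bx$ yields $\langle x-y, Bx-By\rangle = \langle y-x, By-Bx\rangle \geq n^2 \alpha(1)$. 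The elementary estimate $n \geq L/2$ (immediate for $L \geq 2$, while $n=1 > L/2$ when $1 < L < 2$) then gives $n^2 \geq \|x-y\|^2/4$, so that $\langle x-y, Bx-By\rangle \geq (\alpha(1)/4)\|x-y\|^2 \geq \beta_\alpha(\eps)\|x-y\|^2$.

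The only real point of care is to apply the modulus at the fixed threshold $1$ rather than at $L/n$, since $\alpha$ is not assumed to be nondecreasing; beyond this, every step is routine, and I do not anticipate any serious obstacle.
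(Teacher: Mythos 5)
Your proof is correct, but it takes a genuinely different route from the paper's. Both arguments hinge on subdividing the segment from $x$ to $y$ so that each piece has length at least $1$ and then applying the modulus $\alpha$ at the fixed threshold $1$; you are right that applying it at a fixed threshold is the key point of care, since $\alpha$ is not assumed nondecreasing. However, where you subdivide directly into $n := \lfloor \|x-y\| \rfloor$ equal pieces and telescope — yielding $\langle x-y, Bx-By\rangle \geq n^2\alpha(1)$ in one pass, followed by the elementary bound $n \geq \|x-y\|/2$ — the paper instead argues by contradiction: it assumes $\langle x-y, Bx-By\rangle < 2^{2k}\alpha(1)$ for some pair with $2^k \leq \|x-y\|$, takes $k$ minimal, bisects the segment at its midpoint, and derives that one of the two halves violates minimality, reducing eventually to $k=0$ where the modulus at threshold $1$ gives the contradiction. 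Your direct telescoping is arguably cleaner (no induction on a minimal $k$, no contradiction) and it produces the same constant $\alpha(1)/4$; the paper's dyadic bisection has the stylistic advantage of reusing the same "split at the midpoint and push to one side" template that appears elsewhere in the paper (e.g.\ in the proof that strongly nonexpansive $T$ with $-T$ strongly nonexpansive is a contraction for large distances), but neither yields a sharper bound than the other. Your treatment of the boundary case $\|x-y\|=1$ (placed in the "small" regime with the degenerate subcase $\eps \geq 1$) also differs from the paper's cut at $\|x-y\| < 1$ versus $\|x-y\| \geq 1$, but both partitions are handled correctly.
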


\begin{proof}
Let $\eps>0$ and $x$, $y \in X$ with $\|x-y\| \geq \eps$.

If $\|x-y\|<1$, then $\eps<1$ and, using the uniform monotonicity of $B$, we have that
$$\langle x-y,Bx-By \rangle \geq \alpha(\eps) \geq \alpha(\eps)\|x-y\|^2 \geq \beta_\alpha(\eps)\|x-y\|^2.$$

If $\|x-y\|\geq 1$, then it is enough to show that
$$\langle x-y,Bx-By \rangle \geq \frac{\alpha(1)}4 \|x-y\|^2.$$
Assume that
$$\langle x-y,Bx-By \rangle <\frac{\alpha(1)}4 \|x-y\|^2.$$
We have that there is a $k$ such that $2^k \leq \|x-y\| < 2^{k+1}$. Since $\|x-y\| < 2^{k+1}$, we have that $\frac{\alpha(1)}4 \|x-y\|^2 \leq 2^{2k}\alpha(1)$, so $\langle x-y,Bx-By \rangle <2^{2k}\alpha(1)$. Let $k \in \N$ be minimal such that there are $x$, $y \in X$ such that $2^k \leq \|x-y\|$ and $\langle x-y,Bx-By \rangle <2^{2k}\alpha(1)$. Assume that $k>0$.

Let $z:=(x+y)/2$, so $x-z=z-y=(x-y)/2$ and $\|x-z\|$, $\|z-y\| \geq 2^{k-1}$ (taking note that $k-1\geq 0$). Therefore,
\begin{align*}
\langle x-z,Bx-Bz \rangle + \langle z-y,Bz-By \rangle &= \left\langle\frac{x-y}2,Bx-Bz\right\rangle + \left\langle \frac{x-y}2,Bz-By \right\rangle\\
& = \left\langle \frac{x-y}2,Bx-By\right \rangle < 2^{2k-1}\alpha(1) = 2^{2(k-1)}\alpha(1) + 2^{2(k-1)}\alpha(1),
\end{align*}
so either $\langle x-z,Bx-Bz \rangle <2^{2(k-1)}\alpha(1)$ or $\langle z-y,Bz-By \rangle< 2^{2(k-1)}\alpha(1)$, contradicting the minimality of $k$.

Thus, $k=0$ and so $\|x-y\| \geq 1$ and $\langle x-y,Bx-By \rangle <\alpha(1)$. But this contradicts the fact that $B$ is uniformly monotone with modulus $\alpha$.
\end{proof}

The following proposition is a quantitative version of \cite[Lemma 4.4.(ii)]{LiuMouVan22}.

\begin{proposition}\label{p4}
Let $\alpha_\bullet$ be defined as in Proposition~\ref{p2} and $\beta_\bullet$ as in Proposition~\ref{p3}. Let $\psi:(0,\infty) \to (0,\infty)$ and $A$ be a maximally monotone operator on $X$ which is inverse uniformly monotone with modulus $\psi$. Let $\eps>0$ and $x$, $y \in X$ with $\|x-y\| \geq \eps$. Then
$$ \langle J_{A^{-1}}x - J_{A^{-1}}y, J_Ax - J_Ay \rangle + \|J_Ax - J_Ay\|^2 \geq \beta_{\alpha_\psi}(\eps)\|x-y\|^2.$$
\end{proposition}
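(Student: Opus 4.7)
The plan is to chain together the three tools the author has just set up: the identity \eqref{p1}, Proposition~\ref{p2} (which gives a modulus of uniform monotonicity for $J_A$), and Proposition~\ref{p3} (which upgrades such a modulus to a quadratic lower bound). All three already do the work, so the proposition should follow with essentially no further computation.

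Concretely, I would first apply Proposition~\ref{p2} to conclude that the resolvent $J_A$ is uniformly monotone with modulus $\alpha_\psi$. Then I would feed $B := J_A$ and $\alpha := \alpha_\psi$ into Proposition~\ref{p3}, which yields that, for the given $\eps > 0$ and $x$, $y \in X$ with $\|x-y\| \geq \eps$,
$$\langle x-y, J_Ax - J_Ay\rangle \geq \beta_{\alpha_\psi}(\eps)\,\|x-y\|^2.$$
Finally, I would rewrite the left-hand side by means of the identity \eqref{p1}, namely
$$\langle x-y, J_Ax - J_Ay\rangle = \langle J_{A^{-1}}x - J_{A^{-1}}y, J_Ax - J_Ay\rangle + \|J_Ax - J_Ay\|^2,$$
which turns the inequality above into exactly the desired bound.

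There is no real obstacle here: the statement is arranged as a plug-and-play consequence of the preceding two quantitative lemmas together with the resolvent identity. The only thing to double-check is that the hypotheses of Proposition~\ref{p2} (maximal monotonicity of $A$ plus inverse uniform monotonicity with modulus $\psi$) are exactly the ones assumed in the current proposition, so that the modulus $\alpha_\psi$ produced there is legitimately usable as input to Proposition~\ref{p3}; this is immediate from the hypotheses. Thus the proof is essentially one line combining \eqref{p1}, Proposition~\ref{p2} and Proposition~\ref{p3}.
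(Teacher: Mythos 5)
Your proof is correct and follows exactly the paper's own argument: invoke Proposition~\ref{p2} to get that $J_A$ is uniformly monotone with modulus $\alpha_\psi$, feed this into Proposition~\ref{p3} to obtain $\langle x-y, J_Ax - J_Ay\rangle \geq \beta_{\alpha_\psi}(\eps)\|x-y\|^2$, and rewrite via \eqref{p1}. Nothing to add.
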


\begin{proof}
By Proposition~\ref{p2}, $J_A : X \to X$ is uniformly monotone with modulus $\alpha_\psi$, so, by Proposition~\ref{p3}, we get that
$$\langle x-y,J_Ax-J_Ay \rangle\beta_{\alpha_\psi}(\eps)\|x-y\|^2.$$
Using \eqref{p1}, we obtain the conclusion.
\end{proof}

The following proposition is a quantitative version of \cite[Theorem 4.5.(i)]{LiuMouVan22}.

\begin{proposition}\label{p5}
Let $\alpha_\bullet$ be defined as in Proposition~\ref{p2} and $\beta_\bullet$ as in Proposition~\ref{p3}. Define, for any suitable $\psi$, $\eps$,
$$L_\psi(\eps) := \max\left(\frac{4\eps}{\sqrt{1+4\beta_{\alpha_\psi}(\eps)} -1 }, 2\eps \right).$$
Let $\psi:(0,\infty) \to (0,\infty)$ and $A$ be a maximally monotone operator on $X$ which is inverse uniformly monotone with modulus $\psi$. Then, for all $\eps>0$ and $x$, $y \in X$ with $\|Ax-Ay\| \geq L_\psi(\eps)$, we have that $\|x-y\| > \eps$.
\end{proposition}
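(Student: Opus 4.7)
The plan is to prove the contrapositive: assume $\|x-y\|\leq\eps$ and derive $\|Ax-Ay\|<L_\psi(\eps)$. The pivotal trick is to set $u:=x+Ax$ and $v:=y+Ay$; then the standard resolvent identities (specifically, $J_A(a+b)=a$ for $(a,b)\in A$, together with $J_A w + J_{A^{-1}}w = w$) yield $J_Au=x$, $J_Av=y$, $J_{A^{-1}}u=Ax$, $J_{A^{-1}}v=Ay$. This converts the task into one about the resolvents of $A$ at $u,v$ and brings Proposition~\ref{p4} into play.

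Writing $t:=\|Ax-Ay\|$ and supposing for contradiction that $t\geq L_\psi(\eps)$, we have in particular $t\geq 2\eps$. By the reverse triangle inequality, $\|u-v\|\geq t-\|x-y\|\geq 2\eps-\eps=\eps$, so Proposition~\ref{p4} applies at level $\eps$, delivering
$$\langle Ax-Ay,x-y\rangle + \|x-y\|^2 \geq \beta_{\alpha_\psi}(\eps)\|u-v\|^2.$$
I would upper-bound the inner product by $t\eps$ using Cauchy--Schwarz together with $\|x-y\|\leq\eps$, and lower-bound $\|u-v\|^2$ by $t^2$ using monotonicity of $A$ (which forces $\langle x-y,Ax-Ay\rangle\geq 0$, so $\|u-v\|^2 = \|x-y\|^2 + 2\langle x-y,Ax-Ay\rangle + t^2\geq t^2$). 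Substituting these into the displayed inequality yields the one-variable quadratic estimate
$$\beta t^2 - \eps t - \eps^2 \leq 0, \qquad \beta:=\beta_{\alpha_\psi}(\eps).$$

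Solving this in $t\geq 0$ gives $t\leq \tfrac{\eps(1+\sqrt{1+4\beta})}{2\beta}$, and rationalizing (using $(1+\sqrt{1+4\beta})(\sqrt{1+4\beta}-1)=4\beta$) rewrites this as $t\leq \tfrac{2\eps}{\sqrt{1+4\beta}-1} < \tfrac{4\eps}{\sqrt{1+4\beta}-1} \leq L_\psi(\eps)$, contradicting $t\geq L_\psi(\eps)$. The only really delicate bookkeeping is the verification that $\|u-v\|\geq\eps$ so that Proposition~\ref{p4} can be invoked at level $\eps$ — this is precisely what forces the $2\eps$ term into the max defining $L_\psi(\eps)$ — and the factor $4$ (rather than $2$) in the first argument of the max is what produces the strict inequality needed to close the contradiction. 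Beyond this, everything is a routine quadratic manipulation after the resolvent substitution, so I do not anticipate any real obstacle.
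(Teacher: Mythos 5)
Your proof is correct and follows essentially the same route as the paper: the same resolvent substitution $u:=x+Ax$, $v:=y+Ay$, the same invocation of Proposition~\ref{p4} after checking $\|u-v\|\geq\eps$, the same use of monotonicity to expand $\|u-v\|^2$, and the same quadratic estimate closed by rationalizing $\sqrt{1+4\beta}$. The only cosmetic difference is that the paper normalizes to the ratio $a:=\|x-y\|/\|Ax-Ay\|$ and obtains $a+a^2\geq\beta(a^2+1)\geq\beta$, whereas you bound $\|x-y\|\leq\eps$ up front and solve the quadratic in $t=\|Ax-Ay\|$ directly; both lead to the same final inequality and exploit the factor of $4$ (instead of $2$) in $L_\psi$ to get the strict conclusion.
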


\begin{proof}
We shall drop the indices in the sequel. Assume that there are $\eps>0$ and $x$, $y \in X$ with $\|Ax-Ay\| \geq L(\eps)$ and $\|x-y\| \leq\eps$. Now, we have that
$$\|(x+Ax)-(y+Ay)\| \geq \|Ax-Ay\| - \|x-y\| \geq L(\eps)-\eps \geq 2\eps-\eps = \eps.$$
Taking into account that $J_A(x+Ax)=x$, $J_A(y+Ay)=y$, $J_{A^{-1}}(x+Ax)=Ax$ and $J_{A^{-1}}(y+Ay)=Ay$, we have, by Proposition~\ref{p4}, that
$$\langle Ax-Ay,x-y\rangle + \|x-y\|^2 \geq \beta(\eps)\|(x+Ax)-(y+Ay)\|^2,$$
so, using the monotonicity of $A$,
\begin{align*}
\|Ax-Ay\|\|x-y\| + \|x-y\|^2 &\geq \beta(\eps)(\|x-y\|^2 + \|Ax-Ay\|^2 + 2\langle x-y,Ax-Ay \rangle) \\
&\geq \beta(\eps)(\|x-y\|^2 + \|Ax-Ay\|^2 ).
\end{align*}
Set
$$a:=\frac{\|x-y\|}{\|Ax-Ay\|} \leq \frac\eps{L(\eps)} \leq \frac{\sqrt{1+4\beta(\eps)} -1 }4 < \frac{\sqrt{1+4\beta(\eps)} -1 }2,$$
so we have that
$$a\|Ax-Ay\|^2 + a^2\|Ax-Ay\|^2 \geq \beta(\eps)(a^2\|Ax-Ay\|^2 + \|Ax-Ay\|^2),$$
i.e. that $a+a^2 \geq \beta(\eps)(a^2+1)$.
On the other hand, we have that
$$a+ a^2  < \frac{2\sqrt{1+4\beta(\eps)}-2}4 +\frac{1+4\beta(\eps)+1-2\sqrt{1+4\beta(\eps)}}4 = \beta(\eps) \leq \beta(\eps)(a^2+1),$$
a contradiction.
\end{proof}

The following proposition is a quantitative version of \cite[Theorem 4.5.(iv)]{LiuMouVan22}.

\begin{proposition}\label{p6}
Let $L_\bullet$ be defined as in Proposition~\ref{p5}. Define, for any suitable $\psi$, $\eps$,
$$\gamma_\psi(\eps) := \min\left(\frac{\psi(\eps)}{L_\psi(\eps)}, \eps \right).$$
Let $\psi:(0,\infty) \to (0,\infty)$ and $A$ be a maximally monotone operator on $X$ which is inverse uniformly monotone with modulus $\psi$. Then, for all $\eps>0$ and $x$, $y \in X$ with $\|x-y\|<\gamma_\psi(\eps)$, we have that $\|Ax-Ay\| < \eps$ (that is, $\gamma_\psi$ is a modulus of uniform continuity for $A$).
\end{proposition}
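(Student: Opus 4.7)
The plan is to argue by contradiction, combining Proposition~\ref{p5} (which bounds $\|Ax-Ay\|$ in terms of $\|x-y\|$ from above via a contrapositive) with the defining inequality of inverse uniform monotonicity via Cauchy--Schwarz. Concretely, I would suppose there exist $\eps>0$ and $x,y\in X$ with $\|x-y\|<\gamma_\psi(\eps)$ and $\|Ax-Ay\|\geq\eps$, and derive a contradiction.

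First I would use the second clause in the definition of $\gamma_\psi$: since $\gamma_\psi(\eps)\leq\eps$, we have $\|x-y\|\leq\eps$, so by the contrapositive of Proposition~\ref{p5} (applied at the same $\eps$), it follows that $\|Ax-Ay\|<L_\psi(\eps)$. This gives an explicit upper bound on $\|Ax-Ay\|$ depending only on $\psi$ and $\eps$.

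Next, since $A$ is inverse uniformly monotone with modulus $\psi$ and by assumption $\|Ax-Ay\|\geq\eps$, the defining inequality yields
\[
\langle x-y,\,Ax-Ay\rangle \geq \psi(\eps).
\]
Applying Cauchy--Schwarz on the left and the two bounds $\|x-y\|<\gamma_\psi(\eps)$ and $\|Ax-Ay\|<L_\psi(\eps)$ on the right, together with the first clause $\gamma_\psi(\eps)\leq\psi(\eps)/L_\psi(\eps)$ in the definition of $\gamma_\psi$, we obtain
\[
\psi(\eps)\;\leq\;\|x-y\|\cdot\|Ax-Ay\|\;<\;\gamma_\psi(\eps)\cdot L_\psi(\eps)\;\leq\;\frac{\psi(\eps)}{L_\psi(\eps)}\cdot L_\psi(\eps)\;=\;\psi(\eps),
\]
a contradiction.

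There is no real obstacle here beyond making sure the two clauses in the definition of $\gamma_\psi$ are each used for their own purpose: the clause $\gamma_\psi(\eps)\leq\eps$ is what activates Proposition~\ref{p5} to control $\|Ax-Ay\|$ from above, and the clause $\gamma_\psi(\eps)\leq\psi(\eps)/L_\psi(\eps)$ is what makes the Cauchy--Schwarz estimate collapse against the lower bound from inverse uniform monotonicity. Since Proposition~\ref{p5} is already stated in the ``if $\|Ax-Ay\|\geq L_\psi(\eps)$ then $\|x-y\|>\eps$'' form, its contrapositive is immediate and no extra quantitative work is needed.
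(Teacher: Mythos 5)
Your proof is correct and follows essentially the same path as the paper's: both use $\gamma_\psi(\eps)\leq\eps$ to invoke (the contrapositive of) Proposition~\ref{p5} for the bound $\|Ax-Ay\|<L_\psi(\eps)$, then combine Cauchy--Schwarz with the clause $\gamma_\psi(\eps)\leq\psi(\eps)/L_\psi(\eps)$ to drive $\langle x-y,Ax-Ay\rangle$ below $\psi(\eps)$, which the modulus then converts into $\|Ax-Ay\|<\eps$. The paper phrases this directly rather than by contradiction, but the chain of estimates is the same.
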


\begin{proof}
We have that $\|x-y\| < \gamma_\psi(\eps) \leq \eps$, so, by Proposition~\ref{p5}, $\|Ax-Ay\|<L_\psi(\eps)$. Thus,
$$\langle Ax-Ay,x-y \rangle \leq \|Ax-Ay\|\|x-y\| \leq \|Ax-Ay\| \cdot \gamma_\psi(\eps) < L_\psi(\eps) \cdot \gamma_\psi(\eps) \leq \psi(\eps),$$
so, since $A$ is inverse uniformly monotone with modulus $\psi$, we get that $\|Ax-Ay\| < \eps$.
\end{proof}

\section{Supercoercivity, rectangularity and asymptotic regularity}\label{sec:rect}

The following proposition will refine the analysis of Proposition~\ref{prop-m} into a form which is suitable for the supercoercive case; it was inspired by the usual characterization of uniform smoothness in Banach spaces (treated from the point of view of proof mining in \cite{KohLeu12}).

\begin{proposition}
Let $(M,A,B)$ be adequate in the sense of Proposition~\ref{prop-m}. Let $\eta: (0,\infty) \to (0,\infty)$. TFAE:
\begin{enumerate}[(a)]
\item for all $N>0$ and all $s \geq \eta(N)$, $\vp_{(M,A,B)}(s) \geq Ns$;
\item there exists a classical modulus $\vp$ for $(M,A,B)$ such that for all $N>0$ and all $s \geq \eta(N)$, $\vp(s) \geq Ns$;
\item there exists a modulus $\psi$ for $(M,A,B)$ such that for all $N>0$ and all $s \geq \eta(N)$ such that there is an $x \in M$ with $A(x) \geq s$, $\psi(s) \geq Ns$;
\item for all $N>0$, all $s\geq \eta(N)$ and all $x \in M$ with $A(x) \geq s$, $B(x) \geq Ns$;
\item for all $N>0$ and all $x \in M$ with $A(x) \geq \eta(N)$, $B(x) \geq N \cdot A(x)$.
\end{enumerate}
If this holds, we say that $\eta$ is a {\bf supercoercivity modulus} for $(M,A,B)$.
\end{proposition}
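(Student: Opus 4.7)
The plan is to prove the equivalence by establishing the cycle $(a) \Rightarrow (b) \Rightarrow (c) \Rightarrow (d) \Rightarrow (e) \Rightarrow (a)$, leaning on the constructions already used in the proof of Proposition~\ref{prop-m}. The heart of the argument is the observation that $\vp_{(M,A,B)}$ is the pointwise-largest classical modulus, so quantitative bounds on any modulus can be transferred to $\vp_{(M,A,B)}$ and vice versa.

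For $(a) \Rightarrow (b)$, take $\vp := \vp_{(M,A,B)}$. By part (i) of Proposition~\ref{prop-m} this is nondecreasing with $\vp(A(x)) \leq B(x)$, and by adequacy (combined with (a) of part (ii) of Proposition~\ref{prop-m}) it vanishes only at $0$, so it is a classical modulus; the quantitative bound is then exactly hypothesis (a). For $(b) \Rightarrow (c)$, reuse the construction in the proof of $(b) \Rightarrow (c)$ of Proposition~\ref{prop-m}: define $\psi(\eps):=\vp(\eps)$ whenever there is some $x \in M$ with $A(x)\geq\eps$, and $\psi(\eps):=1$ otherwise. The same argument shows that this $\psi$ is a modulus, and on the ``interesting'' values of $s$ (exactly those where there is an $x$ with $A(x)\geq s$) we have $\psi(s)=\vp(s)\geq Ns$ by (b).

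For $(c) \Rightarrow (d)$, given $N>0$, $s \geq \eta(N)$ and $x \in M$ with $A(x) \geq s$, the hypothesis of (c) is met by the witness $x$, so $\psi(s)\geq Ns$; combined with the modulus bound $\psi(s)\leq B(x)$ (which applies because $A(x)\geq s$), this gives $B(x)\geq Ns$. For $(d)\Rightarrow(e)$, given $N>0$ and $x\in M$ with $A(x)\geq \eta(N)$, put $s:=A(x)$ and apply (d) to the pair $(s,x)$; this yields $B(x)\geq Ns = N\cdot A(x)$. Finally, for $(e)\Rightarrow(a)$, let $N>0$ and $s\geq\eta(N)$; since $\eta(N)>0$ we have $s>0$, so by definition $\vp_{(M,A,B)}(s)=\inf\{B(y)\mid y\in M,\ A(y)\geq s\}$, and for each such $y$ we have $A(y)\geq s\geq \eta(N)$, so by (e), $B(y)\geq N\cdot A(y) \geq Ns$; taking the infimum gives $\vp_{(M,A,B)}(s)\geq Ns$.

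I expect no serious obstacle: every step is essentially a bookkeeping exercise on the quantifier structure, and the technical constructions needed for $(a)\Rightarrow(b)$ and $(b)\Rightarrow(c)$ are already available from Proposition~\ref{prop-m}. The one place to be careful is in $(c)\Rightarrow(d)$: the hypothesis of (c) is existential in $x$, so one must remember to feed in the given $x$ itself as the witness before invoking the modulus inequality $\psi(s) \leq B(x)$.
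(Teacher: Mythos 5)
Your proof is correct and follows essentially the same cyclic implication $(a)\Rightarrow(b)\Rightarrow(c)\Rightarrow(d)\Rightarrow(e)\Rightarrow(a)$ as the paper, reusing the same constructions from Proposition~\ref{prop-m}. In fact, your version of $(c)\Rightarrow(d)$ is stated more carefully than the paper's, which has the modulus inequality written backwards as $\psi(s)\geq B(x)$ (it should read $\psi(s)\leq B(x)$, as you correctly have it) and leaves implicit the appeal to $\psi(s)\geq Ns$ from (c).
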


\begin{proof}
For `$(a)\Rightarrow(b)$', just take $\vp:=\vp_{(M,A,B)}$.

For `$(b)\Rightarrow(c)$', define $\psi$ as in the proof of Proposition~\ref{prop-m}.(ii), $(b)\Rightarrow(c)$. Let $N>0$ and $s \geq \eta(N)$ such that there is an $x \in M$ with $A(x) \geq s$. Then $\psi(s)=\vp(s) \geq Ns$.

For `$(c)\Rightarrow(d)$', let $N>0$, $s \geq \eta(N)$ and $x \in M$ with $A(x) \geq s$. Since $\psi$ is a modulus, we have that $\psi(s) \geq B(x)$, so $B(x) \geq Ns$.

For `$(d)\Rightarrow(e)$', just take $s:=A(x)$.

For `$(e)\Rightarrow(a)$', let $N>0$ and $s \geq \eta(N) >0$. We want to show that for every $x \in M$ with $A(x) \geq s$, $B(x) \geq Ns$. Since $A(x) \geq \eta(N)$, $B(x) \geq N \cdot A(x) \geq Ns$.
\end{proof}

We shall say that a uniformly monotone operator $A$ on $X$ is {\it supercoercively uniformly monotone} (traditionally called `uniformly monotone with a supercoercive modulus', see, e.g., \cite[p. 386]{BauCom17}) if $A$ admits a classical modulus of uniform monotonicity $\vp$ such that $\lim_{s\to\infty} \frac{\vp(s)}s = \infty$. By the proposition above, this is equivalent to the fact that there exists a function $\eta : (0,\infty) \to (0,\infty)$, which we shall call a {\it supercoercivity modulus} for $A$, such that for all $N>0$ and all $(a,b)$, $(c,d) \in A$ with $\|a-c\| \geq \eta(N)$, $\langle a-c,b-d \rangle \geq N\|a-c\|$.

Similarly, we shall say that a super strongly nonexpansive mapping $T:X\to X$ is {\it supercoercively super strongly nonexpansive} if $T$ admits an SSNE-modulus $\chi$ such that $\lim_{s\to\infty} \frac{\chi(s)}s = \infty$. Again, by the proposition above (modifying a bit the definition of $\psi$ -- our $\chi$ -- in the proof so that the limit exists regardless of the branch), this is equivalent to the fact that there exists a function $\nu : (0, \infty) \to (0,\infty)$, which we shall call a {\it supercoercivity modulus} for $T$, such that for all $M>0$ and all $x$, $y \in X$ with $\|x-y\|^2 - \|Tx-Ty\|^2 < M \|(x-y)-(Tx-Ty)\|$, we have that $\|(x-y)-(Tx-Ty)\| < \nu(M)$.

By Proposition~\ref{av-ssne}, we know that, for any $\alpha \in (0,1)$, an $\alpha$-averaged mapping has $\chi(\eps):=\eps^2 \cdot \frac{1-\alpha}\alpha$ as an SSNE-modulus and thus we may see that it admits the supercoercivity modulus $\nu(M):=M \cdot \frac{\alpha}{1-\alpha}$, since for all $M>0$ and all $s \geq \nu(M)$, $\chi(s) = s^2  \cdot \frac{1-\alpha}\alpha \geq Ms$.

\begin{proposition}
Let $K:(0,\infty) \to [0,1)$ and $T:X\to X$ such that $T$ is a contraction for large distances with modulus $K$, so, by Proposition~\ref{cld-ssne}, $T$ is super strongly nonexpansive. Then $T$ admits the supercoercivity modulus
$$\nu(M):= \max\left(2, \frac{4M}{1-(K(1))^2}\right).$$
\end{proposition}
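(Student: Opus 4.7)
The plan is to argue by contrapositive: assuming $\|(x-y)-(Tx-Ty)\| \geq \nu(M)$, I will show $\|x-y\|^2 - \|Tx-Ty\|^2 \geq M \|(x-y)-(Tx-Ty)\|$, which is exactly what supercoercivity (in the equivalent form given just before the statement) demands.

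First, set $u := \|(x-y)-(Tx-Ty)\|$ and suppose $u \geq \nu(M)$. Since a contraction for large distances is in particular nonexpansive (as noted in the remark after the definition of this class), the triangle inequality gives $u \leq \|x-y\| + \|Tx-Ty\| \leq 2\|x-y\|$, so
$$\|x-y\| \geq \frac{u}{2} \geq \frac{\nu(M)}{2} \geq 1,$$
using the first branch $\nu(M) \geq 2$ of the max.

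Now, because $\|x-y\| \geq 1$, the large-distance contraction property applied at threshold $\eps=1$ yields $\|Tx-Ty\| \leq K(1)\|x-y\|$, hence
$$\|x-y\|^2 - \|Tx-Ty\|^2 \geq (1-K(1)^2)\|x-y\|^2 \geq (1-K(1)^2)\cdot\frac{u^2}{4}.$$
At this point I only need $(1-K(1)^2)u/4 \geq M$, i.e. $u \geq 4M/(1-K(1)^2)$, and this is precisely the second branch of the max defining $\nu(M)$. Multiplying back by $u$ gives $\|x-y\|^2 - \|Tx-Ty\|^2 \geq Mu$, closing the argument.

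There is no real obstacle here; the only thing to be careful about is justifying why $\nu$ needs both branches of the max: the lower bound $\nu(M) \geq 2$ is used to force $\|x-y\| \geq 1$ so that the modulus $K$ can be evaluated at a fixed point $1$ (avoiding dependence of the threshold on $u$ itself), while $\nu(M) \geq 4M/(1-K(1)^2)$ is what finally absorbs the factor $u/4$ in the quadratic lower bound. Both are genuinely needed, and the rest is just the chain of inequalities above.
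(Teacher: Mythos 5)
Your proof is correct and takes essentially the same approach as the paper's: both establish $\|x-y\|\geq 1$ via the first branch of the max, apply the contraction estimate with $K(1)$, and close with the chain $\|x-y\|^2-\|Tx-Ty\|^2\geq(1-K(1)^2)\|x-y\|^2\geq\frac14(1-K(1)^2)u^2\geq Mu$. The paper phrases it as a proof by contradiction rather than contrapositive, but the inequalities are identical.
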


\begin{proof}
We will set $\beta:=K(1)$. Let $M>0$ and $x$, $y \in X$ with $\|x-y\|^2 - \|Tx-Ty\|^2 < M \|(x-y)-(Tx-Ty)\|$ and assume that $\|(x-y)-(Tx-Ty)\| \geq \nu(M) >0$. Since
$$\frac12\|(x-y)-(Tx-Ty)\| \leq \frac12(\|x-y\|+\|Tx-Ty\|) \leq \|x-y\|,$$
we have that $\|x-y\| \geq \frac12 \nu(M) \geq 1$, so $\|Tx-Ty\| \leq \beta\|x-y\|$. We get that
\begin{align*}
M \|(x-y)-(Tx-Ty)\| &> \|x-y\|^2 - \|Tx-Ty\|^2 \\
&\geq (1-\beta^2)\|x-y\|^2 \\
&\geq \frac14 (1-\beta^2)\|(x-y)-(Tx-Ty)\|^2,
\end{align*}
so
$$M>  \frac14 (1-\beta^2)\|(x-y)-(Tx-Ty)\| \geq \frac14 (1-\beta^2)\nu(M) \geq \frac14 (1-\beta^2) \cdot  \frac{4M}{1-\beta^2} = M,$$
a contradiction.
\end{proof}

\begin{proposition}
Let $A$ be a maximally monotone operator on $X$ which is inverse uniformly monotone, so, by Proposition~\ref{ium-ssne}, $R_A$ is super strongly nonexpansive. Let $\eta:(0, \infty) \to (0,\infty)$ be a  supercoercivity modulus for $A$. Then $R_A$ admits the supercoercivity modulus $\nu(M):=2\eta(M/2)$.
\end{proposition}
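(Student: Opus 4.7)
The plan is to argue by contradiction, translating the supercoercivity hypothesis on $A$ through the parametrization \eqref{ara} that realizes the bijection between $A$ and $R_A$. This is essentially the same dictionary used in Propositions~\ref{ium-ssne} and \ref{ssne-ium}; only the supercoercivity bookkeeping needs to be adapted.

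Concretely, given $M > 0$ and $x$, $y \in X$ satisfying
$$\|x-y\|^2 - \|R_Ax - R_Ay\|^2 < M\|(x-y) - (R_Ax - R_Ay)\|,$$
I would set $a := (x+R_Ax)/2$, $b := (x-R_Ax)/2$, $c := (y+R_Ay)/2$, $d := (y-R_Ay)/2$, so that $(a,b), (c,d) \in A$ by \eqref{ara}. The two key identities are $\|b-d\| = \tfrac12\|(x-y) - (R_Ax - R_Ay)\|$ and
$$\|x-y\|^2 - \|R_Ax - R_Ay\|^2 = \langle (x-y)+(R_Ax-R_Ay),(x-y)-(R_Ax-R_Ay)\rangle = 4\langle a-c, b-d\rangle,$$
the latter being the polarization-type expansion already used in Proposition~\ref{ium-ssne}. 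Assuming toward a contradiction that $\|(x-y) - (R_Ax - R_Ay)\| \geq 2\eta(M/2)$ forces $\|b-d\| \geq \eta(M/2)$, so applying the supercoercivity modulus $\eta$ of $A$ (as an inverse uniformly monotone operator) with $N := M/2$ yields $\langle a-c, b-d \rangle \geq (M/2)\|b-d\|$. Multiplying by $4$ and substituting gives $\|x-y\|^2 - \|R_Ax - R_Ay\|^2 \geq M\|(x-y) - (R_Ax - R_Ay)\|$, contradicting the hypothesis.

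There is no genuine obstacle here; this is essentially a factor-of-$2$ bookkeeping exercise that mirrors how the modulus $\chi(\eps) = 4\psi(\eps/2)$ of Proposition~\ref{ium-ssne} was obtained. The only point worth stating explicitly is that, since $A$ is inverse uniformly monotone (i.e.\ $A^{-1}$ is the uniformly monotone object), the supercoercivity modulus $\eta$ controls the size of $\|b-d\|$ rather than $\|a-c\|$, in line with the convention already fixed in Section~\ref{sec:um} when translating between $\psi$- and $\chi$-moduli.
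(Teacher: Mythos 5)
Your proof is correct and follows essentially the same route as the paper's: contradiction via the parametrization~\eqref{ara}, the identity $\|x-y\|^2 - \|R_Ax-R_Ay\|^2 = 4\langle a-c, b-d\rangle$, and an application of the supercoercivity modulus of $A^{-1}$ with $N = M/2$ on the second coordinate $\|b-d\|$. The factor-of-$2$ bookkeeping and the point about $\eta$ controlling $\|b-d\|$ rather than $\|a-c\|$ are exactly as in the paper.
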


\begin{proof}
Let $M>0$ and $x$, $y \in X$ with $\|x-y\|^2 - \|R_Ax- R_Ay\|^2 < M\|(x-y)-(R_Ax-R_Ay)\|$. Assume that $\|(x-y)-(R_Ax-R_Ay)\| \geq \eta(M)$, so
$$\left\|\frac{x-R_Ax}2 - \frac{y-R_Ay}2 \right\| \geq \frac{\nu(M)}2 = \eta\left(\frac{M}2\right).$$
Using \eqref{ara}, we get that
$$\left\langle \frac{x+R_Ax}2 - \frac{y+R_Ay}2, \frac{x-R_Ax}2 - \frac{y-R_Ay}2 \right\rangle\geq \frac{M}2 \left\|\frac{x-R_Ax}2 - \frac{y-R_Ay}2 \right\|,$$
so
\begin{align*}
\|x-y\|^2 - \|R_Ax- R_Ay\|^2 &= \langle (x-y) + (R_Ax-R_Ay),(x-y) - (R_Ax-R_Ay)\rangle \\
&\geq 4 \cdot \frac{M}2 \left\|\frac{x-R_Ax}2 - \frac{y-R_Ay}2 \right\|\\
& = M \|(x-y)-(R_Ax-R_Ay)\|,
\end{align*}
a contradiction.
\end{proof}

\begin{proposition}\label{etanu}
For every $\nu:(0,\infty) \to (0,\infty)$ and every $N>0$, define
$$\eta_\nu(N):=\nu(2N)/2.$$
Let $A$ be a maximally monotone operator on $X$ which is inverse uniformly monotone, so, by Proposition~\ref{ium-ssne}, $R_A$ is super strongly nonexpansive. Let $\nu:(0, \infty) \to (0,\infty)$ be a  supercoercivity modulus for $R_A$. Then $A$ admits the supercoercivity modulus $\eta_\nu$.
\end{proposition}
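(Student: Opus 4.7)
The plan is to mirror the argument of the previous proposition in the reverse direction. Since the preceding proposition converts a supercoercivity modulus for $A$ into one for $R_A$ via \eqref{ara}, I expect the current proposition to follow by running the same equivalence the other way: supercoercivity for $R_A$ along arbitrary $(x,y)$ translates directly to supercoercivity for $A$ along pairs in the graph, once we parametrize the graph via \eqref{ara}.

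Concretely, I would argue by contradiction. Suppose there is $N>0$ and $(a,b),(c,d)\in A$ such that $\|b-d\|\ge\eta_\nu(N)=\nu(2N)/2$ but $\langle a-c,b-d\rangle < N\|b-d\|$. By \eqref{ara}, pick $x,y\in X$ with $a=(x+R_Ax)/2$, $b=(x-R_Ax)/2$, $c=(y+R_Ay)/2$, $d=(y-R_Ay)/2$. Then $b-d=\tfrac12\bigl((x-y)-(R_Ax-R_Ay)\bigr)$, so the first inequality becomes $\|(x-y)-(R_Ax-R_Ay)\|\ge \nu(2N)$. At the same time, the polarization computation
\[\|x-y\|^2-\|R_Ax-R_Ay\|^2=\langle(x-y)+(R_Ax-R_Ay),(x-y)-(R_Ax-R_Ay)\rangle=4\langle a-c,b-d\rangle\]
together with the second inequality gives
\[\|x-y\|^2-\|R_Ax-R_Ay\|^2 < 4N\|b-d\| = 2N\cdot\|(x-y)-(R_Ax-R_Ay)\|.\]

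Setting $M:=2N$, the contrapositive of the definition of supercoercivity modulus for $R_A$ applied to $\nu$ forces $\|(x-y)-(R_Ax-R_Ay)\|<\nu(M)=\nu(2N)$, contradicting the lower bound extracted above. This yields the claim.

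I do not anticipate a substantive obstacle: the only thing that has to be watched is bookkeeping of the factors of $2$ and $4$ arising from the parametrization \eqref{ara} and from the identity $\|u\|^2-\|v\|^2=\langle u+v,u-v\rangle$, and these align exactly with the choice $\eta_\nu(N)=\nu(2N)/2$, so that the constants match and the associations of moduli in Propositions~\ref{ium-ssne} and \ref{ssne-ium} are mirrored at the supercoercivity level.
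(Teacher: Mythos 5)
Your proposal is correct and matches the paper's proof essentially step for step: parametrize the graph of $A$ via \eqref{ara}, use the polarization identity $\|u\|^2-\|v\|^2=\langle u+v,u-v\rangle$ to convert the inner-product bound into the quadratic-difference bound with $M=2N$, and then apply the supercoercivity modulus for $R_A$ (applied directly, not really its contrapositive) to reach a contradiction with $\|b-d\|\ge\eta_\nu(N)$. The bookkeeping of the factors of $2$ and $4$ is exactly as in the paper.
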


\begin{proof}
Let $N>0$ and $(a,b)$, $(c,d) \in A$ with $\|b-d\| \geq \eta(N)$. Assume that $\langle a-c,b-d \rangle < N\|b-d\|$. Using \eqref{ara}, we get, as in the proof of Proposition~\ref{ssne-ium}, that there are $x$, $y \in X$ such that
$$a=\frac{x+R_Ax}2,\ b=\frac{x-R_Ax}2,\ c=\frac{y+R_Ay}2,\ d=\frac{y-R_Ay}2,$$
so
$$\left\langle \frac{x+R_Ax}2 - \frac{y+R_Ay}2, \frac{x-R_Ax}2 - \frac{y-R_Ay}2 \right\rangle< N \left\|\frac{x-R_Ax}2 - \frac{y-R_Ay}2 \right\|$$
and
\begin{align*}
\|x-y\|^2 - \|R_Ax- R_Ay\|^2 &= \langle (x-y) + (R_Ax-R_Ay),(x-y) - (R_Ax-R_Ay) \rangle \\
&< 4N \left\|\frac{x-R_Ax}2 - \frac{y-R_Ay}2 \right\| \\
&= 2N \|(x-y)-(R_Ax-R_Ay)\|.
\end{align*}
We thus get that $ \|(x-y)-(R_Ax-R_Ay)\| < \nu(2N) = 2\eta(N)$, but, since $(x-y)-(R_Ax-R_Ay) = 2(b-d)$, we have that $\|b-d\|<\eta(N)$, a contradiction.
\end{proof}

As in the previous section, we notice that the associations of moduli in the above two propositions are mutually inverse.

A set-valued operator $A \subseteq X \times X$ is called {\it rectangular} -- or {\it $3^*$-monotone} -- if for any $c$ in the domain of $A$ and any $b'$ in the range of $A$, $\sup_{(a,a')\in A} \langle a-c,b'-a'\rangle <\infty$. Quantitative versions of rectangularity have implicitly appeared in \cite{Koh19b} and \cite{Sip22}, while, recently, the notion of `modulus of uniform rectangularity' has been explicitly formalized in \cite{KohPis}. The following proposition generalizes \cite[Proposition 2.1]{Sip22}, and expresses quantitatively the fact (for which see \cite[Exemple 2]{BreHar76} and \cite[Example 25.15]{BauCom17}) that supercoercively uniformly monotone operators are rectangular.

\begin{proposition}\label{prop-theta}
Put, for all $\eta:(0,\infty) \to (0,\infty)$, $L_1$, $L_2$, $L_3 > 0$,
\begin{align*}
\rho(\eta,L_1,L_2,L_3) &:= 2L_3 + L_1L_3 + \eta(2L_1+2L_2+2)\\
\Theta(\eta,L_1,L_2,L_3)&:=(L_1+L_2) (L_3 + \rho(\eta,L_1,L_2,L_3)).
\end{align*}
Let $\eta:(0,\infty) \to (0,\infty)$, $L_1$, $L_2$, $L_3 > 0$ and $A: X \to X$ be inverse uniformly monotone with supercoercivity modulus $\eta$. Let $b$, $c \in X$ with $\|b\|\leq L_1$, $\|c\|\leq L_2$ and $\|Ab\| \leq L_3$. Then, for all $a \in X$,
$$\langle a-c,Ab-Aa \rangle \leq \Theta(\eta,L_1,L_2,L_3).$$
\end{proposition}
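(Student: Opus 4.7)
The plan is to decompose
$$\langle a-c, Ab-Aa\rangle = \langle a-b, Ab-Aa\rangle + \langle b-c, Ab-Aa\rangle$$
and estimate each summand separately. Since $A^{-1}$ is uniformly monotone and hence monotone, so is $A$, which gives $\langle a-b, Ab-Aa\rangle = -\langle a-b, Aa-Ab\rangle \leq 0$. For the second summand, Cauchy--Schwarz and the triangle inequality yield $\langle b-c, Ab-Aa\rangle \leq (L_1+L_2)(L_3 + \|Aa\|)$. The only obstacle is that $a$ is arbitrary and $\|Aa\|$ has no a priori bound in terms of $L_1, L_2, L_3$, so I will case-split on whether $\|Aa\|$ exceeds the threshold $\rho := \rho(\eta,L_1,L_2,L_3)$.

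In the easy case $\|Aa\| \leq \rho$, combining the two bounds above directly gives $\langle a-c, Ab-Aa\rangle \leq (L_1+L_2)(L_3+\rho) = \Theta$. In the hard case $\|Aa\| > \rho$, set $N_0 := 2L_1+2L_2+2$. Since $\rho - L_3 = L_3 + L_1L_3 + \eta(N_0) > \eta(N_0)$, the reverse triangle inequality gives $\|Aa-Ab\| \geq \|Aa\| - \|Ab\| > \eta(N_0)$, so the supercoercivity hypothesis on $A^{-1}$ (which, for a single-valued $A$, reads $\|Aa-Ab\| \geq \eta(N) \Rightarrow \langle a-b, Aa-Ab\rangle \geq N\|Aa-Ab\|$) applies with $N = N_0$. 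Combining this with the crude estimate $\langle b-c, Ab-Aa\rangle \leq (L_1+L_2)\|Aa-Ab\|$ yields
$$\langle a-c, Ab-Aa\rangle \leq -N_0\|Aa-Ab\| + (L_1+L_2)\|Aa-Ab\| = -(L_1+L_2+2)\|Aa-Ab\| \leq 0 \leq \Theta.$$

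The key design choice, and the reason behind the specific form of the constants, is picking $N_0$ strictly larger than the upper bound $L_1+L_2$ on $\|b-c\|$, so that the supercoercive gain on the first summand strictly dominates the Cauchy--Schwarz loss on the second in the hard case and flips the sign of the whole expression. The argument of $\eta$ in $\rho$ then records this choice, while the additive cushion $L_3 + L_1L_3$ in $\rho - L_3$ is simply slack that comfortably crosses the supercoercivity threshold and only enlarges $\Theta$. Apart from this balancing of constants, the proof is a routine interplay of monotonicity, Cauchy--Schwarz, and the triangle inequality.
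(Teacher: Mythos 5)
Your proof is correct. You share the paper's overall scaffolding — the same decomposition $\langle a-c, Ab-Aa\rangle = \langle a-b, Ab-Aa\rangle + \langle b-c, Ab-Aa\rangle$, the same threshold $\rho$ on $\|Aa\|$, and an identical treatment of the easy case $\|Aa\| \leq \rho$ (plain monotonicity of $A$ plus Cauchy--Schwarz) — but your hard case is a genuine and welcome shortcut. Where the paper, after invoking the supercoercivity bound $\langle Aa-Ab, a-b\rangle \geq (2L_1+2L_2+2)\|Aa-Ab\|$, goes on to exploit the extra cushion built into $\rho$ (namely $\|Aa\| \geq 2L_3$ and $\|Aa\| \geq L_1L_3$) to first derive $\|Aa-Ab\| \geq \|Aa\|/2$, then run a three-line chain of estimates to get $\langle Aa-Ab,a\rangle \geq \|c\|\|Aa\|$, and finally conclude $\langle a-c,Ab-Aa\rangle \leq L_2L_3$, you instead feed the very same supercoercivity bound back into the original decomposition and observe that the gain $-N_0\|Aa-Ab\|$ strictly dominates the Cauchy--Schwarz loss $(L_1+L_2)\|Aa-Ab\|$ on the other summand, giving a nonpositive total. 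You also correctly diagnose that the cushion $L_3 + L_1L_3$ inside $\rho - L_3$ is never used by your argument — it is an artifact of the paper's route — and you make transparent the actual structural reason for the constant $2L_1+2L_2+2$, namely that it must strictly exceed $L_1+L_2$. In short, same theorem, same constants, but a noticeably shorter and more illuminating Case 2.
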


\begin{proof}
Let $a \in X$. We shall write $\rho$ for $\rho(\eta,L_1,L_2,L_3)$, so $\Theta(\eta,L_1,L_2,L_3)=(L_1+L_2)(L_3 +\rho)$. If $\|Aa\| \leq \rho$, then, using the fact that $A$ is monotone, so $\langle b-a, Ab-Aa \rangle  \geq 0$, we have that
$$\langle a-c,Ab-Aa \rangle \leq \langle b-c, Ab-Aa \rangle \leq (\|b\|+\|c\|)(\|Ab\|+\|Aa\|) \leq (L_1 + L_2)(L_3 + \rho).$$
In the case where $\|Aa\| \geq \rho >0$, we have that
$$\|Aa-Ab\| \geq \|Aa\| - \|Ab\| \geq \rho - L_3 \geq \eta(2L_1+2L_2+2),$$
so
$$\langle Aa-Ab, a-b \rangle \geq (2L_1+2L_2+2)\|Aa-Ab\|.$$
We know that $\|Aa\| \geq \rho \geq L_1L_3 \geq \|Ab\|\|b\|$ and that $\|Aa\| \geq \rho \geq 2L_3 \geq 2\|Ab\|$, so
$$\|Aa-Ab\| \geq \|Aa\| - \|Ab\| = \|Aa\| \left(1- \frac{\|Ab\|}{\|Aa\|}\right) \geq \|Aa\|\left(1-\frac12\right) = \frac{\|Aa\|}2$$
and
\begin{align*}
\langle Aa-Ab,a \rangle &\geq (2L_1+2L_2+2)\|Aa-Ab\| + \langle Aa-Ab,b \rangle \\
&\geq (2L_1+2L_2+2)\|Aa-Ab\| - \|Aa\| \|b\| - \|Ab\| \|b\| \\
&\geq (2L_1+2L_2+2)\|Aa-Ab\| - L_1 \|Aa\| - \|Ab\| \|b\| \\
&\geq (L_1+L_2+1)\|Aa\| - L_1\|Aa\| - \|Aa\| = L_2\|Aa\| \geq \|c\|\|Aa\|.
\end{align*}

On the other hand,
$$\langle Aa-Ab,c \rangle \leq \|c\|(\|Aa\|+\|Ab\|),$$
so
$$\langle Aa-Ab, a-c \rangle \geq - \|c\|\|Ab\|,$$
i.e.
$$\langle a-c, Ab-Aa \rangle \leq \|c\|\|Ab\| \leq L_2L_3 \leq (L_1+L_2)(L_3+\rho).$$
\end{proof}

The following theorem is a generalization of \cite[Theorem 2.2]{Sip22}. In the proof, we crucially exploit the fact that the original argument was asymmetrical in $R_1$ and $R_2$.

\begin{theorem}\label{thm-phi}
Let $\psi_\bullet$ be defined as in Proposition~\ref{ssne-ium}, $L_\bullet$ as in Proposition~\ref{p5}, $\eta_\bullet$ as in Proposition~\ref{etanu} and $\Theta$ as in Proposition~\ref{prop-theta}. Put, for any $\chi$, $\nu:(0,\infty) \to (0,\infty)$, $\delta > 0$ and $K : (0,\infty) \to (0, \infty)$,
\begin{align*}
B(\nu,K,\delta)&:=\sqrt{\left(K\left(\frac\delta4\right)+\frac\delta8\right)^2 + 2\Theta\left(\eta_\nu,K\left(\frac\delta4\right)+\frac\delta8,K\left(\frac\delta4\right)+\frac\delta8,\frac\delta8\right)}\\
G(\nu,K,\delta)&:=B(\nu,K,\delta) \cdot \max\left(\sqrt{2}, \frac{4B(\nu,K,\delta)}\delta \right)\\
H(\chi,\nu,K,\delta)&:=L_{\psi_\chi}\left(G(\nu,K,\delta) + K\left(\frac\delta4\right)+\frac\delta8 \right)\\
\Phi(\chi,\nu,K,\delta)&:=G(\nu,K,\delta) + H(\chi,\nu,K,\delta) + \frac\delta8.
\end{align*}
Let $\chi$, $\nu:(0,\infty) \to (0,\infty)$ and $R_1$, $R_2:X \to X$ be super strongly nonexpansive mappings such that $R_1$ has SSNE-modulus $\chi$ and $R_2$ has supercoercivity modulus $\nu$. Put $R:=R_2 \circ R_1$. Let $K: (0,\infty) \to (0,\infty)$ be such that for all $i$ and all $\eps > 0$ there is a $p \in X$ with $\|p\| \leq K(\eps)$ and $\|p-R_ip\| \leq \eps$.

Then for all $\delta > 0$ there is a $p \in X$ with $\|p\| \leq \Phi(\chi,\nu,K,\delta)$ and $\|p-Rp\| \leq \delta$.
\end{theorem}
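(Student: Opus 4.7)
The plan is to identify $R_1, R_2$ with reflected resolvents $R_{A_1}, R_{A_2}$ of maximally monotone operators $A_1, A_2$ on $X$ (every nonexpansive self-map of a Hilbert space is such a reflected resolvent, since $(\id + T)/2$ is firmly nonexpansive whenever $T$ is nonexpansive) and to carry out a quantitative Brezis--Haraux-type argument in the style of \cite[Theorem 2.2]{Sip22}. The SSNE-modulus $\chi$ of $R_1$ gives, by Proposition~\ref{ssne-ium}, an inverse uniform monotonicity modulus $\psi_\chi$ for $A_1$; the supercoercivity modulus $\nu$ of $R_2$ gives, by Proposition~\ref{etanu}, a supercoercivity modulus $\eta_\nu$ for $A_2$, whence by Proposition~\ref{prop-theta} the operator $A_2$ is quantitatively rectangular with bound $\Theta$.

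From the approximate fixed point assumption at precision $\delta/4$, the graph identity \eqref{ara} produces pairs $(c, u_1) \in A_1$ and $(b, u_2) \in A_2$ with $c := (p_1+R_1p_1)/2$, $b := (p_2+R_2p_2)/2$ of norm at most $L_1 := K(\delta/4) + \delta/8$ and with $\|u_1\|, \|u_2\| \leq \delta/8$; so $c, b$ are approximate zeros of $A_1, A_2$ of controlled norm and slack. Applying Proposition~\ref{prop-theta} to $A_2$ with parameters $L_1$, $L_2 = L_1$, $L_3 = \delta/8$ gives the key uniform bound $\langle a - c, u_2 - v \rangle \leq \Theta$ for every $(a, v) \in A_2$.

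The main technical step is to use this uniform bound to control the norm of the candidate approximate fixed point. Expanding a squared norm such as $\|a\|^2 = \|c\|^2 + 2\langle a - c, c \rangle + \|a-c\|^2$ and absorbing the cross terms via the rectangularity bound together with $\|u_2\| \leq \delta/8$ should yield $\|a\| \leq B = \sqrt{L_1^2 + 2\Theta}$ for the relevant $a$; the rescaling factor $\max(\sqrt{2}, 4B/\delta)$ then produces $G$, which accounts for the $\delta/4$-slack carried along the way. The uniform continuity of $A_1$ afforded by Proposition~\ref{p5}, via $\psi_\chi$, then gives $\|A_1 a\| < H = L_{\psi_\chi}(G + L_1)$ on the region $\|a\| \leq G + L_1$, controlling the $A_1$-side. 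Assembling these pieces through the reflected-resolvent identity $p - R_2 R_1 p = 2(J_{A_1}p - J_{A_2}(R_1 p))$ and the remaining $\delta/8$ slack yields a $p$ with $\|p\| \leq G + H + \delta/8 = \Phi$ and $\|p - Rp\| \leq \delta$.

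The hard part is keeping track of the asymmetric propagation of the two moduli: only $R_2$ is assumed supercoercive (which is essential for the rectangularity bound $\Theta$), while $R_1$ contributes through the weaker $\psi_\chi$ and its induced $L_{\psi_\chi}$; the bookkeeping has to ensure that the terms $G$, $H$, $\delta/8$ add exactly to $\Phi$ and that all of the approximation errors---coming from $K(\delta/4)$, from $\delta/8$-level slack, from $\Theta$, and from $L_{\psi_\chi}$---combine so as to give $\|p - Rp\| \leq \delta$ in the end.
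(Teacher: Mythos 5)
Your high-level plan matches the paper's: represent $R_1=R_{A_1}$, $R_2=R_{A_2}$ as reflected resolvents, convert $\chi$ to a modulus $\psi_\chi$ for $A_1$ and $\nu$ to a supercoercivity modulus $\eta_\nu$ for $A_2$, invoke Proposition~\ref{prop-theta} for a quantitative rectangularity bound on $A_2$, and carry out a quantitative Br\'ezis--Haraux argument in the style of \cite[Theorem~2.2]{Sip22}. The bookkeeping you describe for $B$, $G$, $H$ and the final $\Phi = G + H + \delta/8$ is also the right skeleton, and your observation that only $R_2$ needs to be supercoercive (so rectangularity is only needed for $A_2$) reflects the deliberate asymmetry of the argument.

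However, there is a genuine gap: you never say where the candidate approximate fixed point comes from. The whole argument hinges on an existential step that your sketch omits. Writing $f := (p_1 - J_{A_1}p_1) + (p_2 - J_{A_2}p_2)$ (a small vector built from the approximate fixed points), one picks a small $\eta \in (0,1)$ depending on $\delta$, $K$, $\Theta$, observes that $A_1 + A_2$ is maximally monotone by the sum rule, and then applies \emph{Minty's theorem} to the Tikhonov-regularized operator $\eta\,\mathrm{id} + A_1 + A_2$ to get a $u \in X$ with $f = \eta u + A_1 u + A_2 u$. The candidate is then $p := u + A_1 u$, and the identity $\mathrm{id} - R_{A_2}R_{A_1} = 2J_{A_1} - 2J_{A_2}R_{A_1}$ is used to show $\|p - Rp\| = \|2u - 2J_{A_2}(u - A_1 u)\| \leq 2\|A_1u + A_2u\| = 2\|f - \eta u\| \leq \delta$. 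Without this Minty/regularization step you have no $p$ to estimate, so the proof does not close.

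A smaller imprecision: the quantity controlled by the rectangularity bound is $\langle u, u - J_{A_1}p_1\rangle \leq c/\eta$ (with $c = \Theta(\cdots)$), and the norm expansion used is $\|J_{A_1}p_1\|^2 = \|u\|^2 - 2\langle u, u - J_{A_1}p_1\rangle + \|u - J_{A_1}p_1\|^2$, which gives $\eta\|u\|^2 \leq \|J_{A_1}p_1\|^2 + 2c$, i.e.\ $\sqrt{\eta}\,\|u\| \leq B$, not $\|u\| \leq B$ as your sketch suggests; the factor of $\max(\sqrt 2, 4B/\delta)$ then converts this into $\|u\| \leq G$ precisely because $\eta$ was chosen so that $\sqrt\eta \cdot B \leq \delta/4$. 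Also, Proposition~\ref{p5} bounds $\|A_1 u - A_1 J_{A_1}p_1\|$ in terms of $\|u - J_{A_1}p_1\|$, not $\|A_1 a\|$ in terms of $\|a\|$ directly; the bound on $\|A_1 u\|$ then comes from adding $\|A_1 J_{A_1}p_1\| = \|p_1 - J_{A_1}p_1\| \leq \delta/8$.
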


\begin{proof}
Let $\delta > 0$. We know that there are two single-valued maximally monotone and inverse uniformly monotone operators, $A$, $B : X \to X$, such that $R_1 = R_A$ and $R_2 = R_B$. We know that $A$ has modulus of inverse uniform monotonicity $\psi_\chi$ and $B$ has supercoercivity modulus $\eta_\nu$.

Put $\eps:=\delta/4$. By the hypothesis, there are $p_1$, $p_2\in X$ such that $\|p_1\|$, $\|p_2\| \leq K(\eps)$ and $\|p_1-R_Ap_1\|$, $\|p_2-R_Ap_2\| \leq \eps$. Since, by the definition of the reflected resolvent, $p_1-R_Ap_1 = 2(p_1-J_Ap_1)$ and $p_2-R_Bp_2=2(p_2-J_Bp_2)$, we have that $\|p_1-J_Ap_1\|$, $\|p_2-J_Bp_2\| \leq \eps/2$. Also, we have, by the definition of the resolvent, that $p_1-J_Ap_1=AJ_Ap_1$ and $p_2-J_Bp_2=BJ_Bp_2$.

Put $f:=p_1-J_Ap_1+p_2-J_Bp_2$,
$$c:=\Theta\left(\eta_\nu, K(\eps)+\frac\eps2, K(\eps)+\frac\eps2, \frac\eps2\right),$$
and
$$\eta:=\min\left(\frac12,\frac{\eps^2}{\left(K(\eps)+\frac\eps2\right)^2 + 2c} \right),$$
so $\eta \in (0,1)$ and
$$\sqrt{\eta}\cdot \sqrt{\left(K(\eps)+\frac\eps2\right)^2 + 2c} \leq\eps.$$

By the sum rule, $A+B$ is maximally monotone. Then, by Minty's theorem, there is an $u \in X$ such that $f=\eta u + Au + Bu$. Since $A$ is monotone,
$\langle Au-(p_1-J_Ap_1), u-J_Ap_1 \rangle \geq 0$. Since $B$ has supercoercivity modulus $\eta_\nu$, and we know that
$$\|J_Bp_2\| \leq \|p_2\|+\|J_Bp_2-p_2\| \leq K(\eps) + \frac\eps2,$$
that similarly,
$$\|J_Ap_1\| \leq K(\eps) + \frac\eps2,$$
and that
$$\|BJ_Bp_2\| = \|p_2-J_Bp_2\| \leq \frac\eps2,$$
we may apply Proposition~\ref{prop-theta} to get that
$$\langle u - J_Ap_1, BJ_Bp_2 - Bu \rangle \leq c,$$
so
$$\langle Bu - (p_2-J_Bp_2), u - J_Ap_1 \rangle \geq -c.$$
Summing up, we get that
$$\langle f-\eta u - f, u - J_Ap_1 \rangle \geq -c,$$
so
$$\langle u,u-J_Ap_1 \rangle \leq c/\eta.$$
On the other hand, we have that
$$\|J_Ap_1\|^2 = \|u-(u-J_Ap_1)\|^2 = \|u\|^2 - 2\langle u,u-J_Ap_1 \rangle + \|u-J_Ap_1\|^2 \geq \|u\|^2 - 2c/\eta + 0,$$
so
$$\|u\|^2 \leq \|J_Ap_1\|^2 + 2c/\eta$$
and
$$\eta \|u\|^2 \leq \eta \|J_Ap_1\|^2 + 2c \leq \|J_Ap_1\|^2 + 2c.$$
Therefore
$$\|\eta u\| = \sqrt{\eta} \cdot \sqrt{\eta} \cdot \|u\| \leq \sqrt{\eta} \cdot \sqrt{\|J_Ap_1\|^2 + 2c} \leq  \sqrt{\eta} \cdot \sqrt{\left(K(\eps) + \frac\eps2\right)^2 + 2c}  \leq \eps,$$
$$\|f-\eta u\| \leq \|f\| + \|\eta u\| \leq \frac\eps2 + \frac\eps2 + \eps = 2\eps.$$

We have that
$$2J_A - \id_X = R_A = \id_XR_A = (2J_B - R_B)R_A = 2J_BR_A - R_BR_A,$$
so
$$2J_A - 2J_BR_A = \id_X -R_BR_A.$$
Now set
$$z:=(2J_A - 2J_BR_A)(u+Au) = (\id_X -R_BR_A)(u+Au).$$
We have (using for the first equality the definition of the resolvent, and for the second the {\it inverse resolvent identity}, \cite[p. 399, (23.17)]{BauCom17}) that
$$Bu = J_{B^{-1}}(u+Bu) = u+Bu - J_B(u+Bu)$$
and, since by the definition of the resolvent, $J_A(u+Au)=u=J_B(u+Bu)$ and, by the definition of the reflected resolvent, $R_A(u+Au)=u-Au$,
$$z=2u-2J_B(u-Au)=2J_B(u+Bu) - 2J_B(u-Au).$$

We may now bound:
$$\|z\| = 2\|J_B(u+Bu) - J_B(u-Au)\| \leq 2\|u+Bu-u+Au\| = 2\|Au+Bu\| = 2\|f-\eta u\| \leq 4\eps = \delta.$$
We may then set $p:=u+Au$, since, as $z=p-Rp$, we have that $\|p-Rp\| \leq \delta$. We now only have to bound $p$.

Since, per the above,
$$\sqrt{\eta} \cdot \|u\| \leq \sqrt{\|J_Ap_1\|^2 + 2c} \leq \sqrt{\left(K(\eps) + \frac\eps2\right)^2 + 2c} = B(\nu,K,\delta),$$
we have, by the definition of $\eta$, that
$$\|u\| \leq B(\nu,K,\delta) \cdot \max\left(\sqrt{2}, \frac{B(\nu,K,\delta)}\eps \right) = G(\nu,K,\delta).$$

Since
$$\|u-J_Ap_1\| \leq \|u\| + \|J_Ap_1\| \leq G(\nu,K,\delta) + K(\eps) + \frac\eps2,$$
and we know that $A$ has $\psi_\chi$ as a modulus of uniform monotonicity, we may apply Proposition~\ref{p5} to get that
$$\|Au-AJ_Ap_1\| \leq L_{\psi_\chi}\left(G(\nu,K,\delta) + K(\eps) + \frac\eps2\right) = H(\chi,\nu,K,\delta).$$
Now we get that
\begin{align*}
\|p\| &\leq \|u\| + \|Au\| \\
&\leq \|u\| + \|Au-AJ_Ap_1\| + \|AJ_Ap_1\| \\
&= \|u\| + \|Au-AJ_Ap_1\| + \|p_1-J_Ap_1\| \\
&\leq G(\nu,K,\delta) + H(\chi,\nu,K,\delta) + \eps/2\\
&= \Phi(\chi,\nu,K,\delta),
\end{align*}
which is what we wanted to show.
\end{proof}

The following theorem is a generalization of \cite[Theorem 2.3]{Sip22}.

\begin{theorem}\label{thm-psi}
Let $\chi_\bullet$ be defined as in Proposition~\ref{prop-comp} and $\Phi$ as in Theorem~\ref{thm-phi}. Define, for all $\delta > 0$, $K : (0,\infty) \to (0, \infty)$ and suitable finite sequences $\{\chi_i\}_i$, $\{\nu_i\}_i \subseteq (0,\infty)^{(0,\infty)}$, $\Psi(2,\{\chi_i\}_{i=1}^1,\{\nu_i\}_{i=2}^2,K,\delta):=\Phi(\chi_1,\nu_2,K,\delta)$ and, for all $m \geq 2$, $\Psi(m+1,\{\chi_i\}_{i=1}^{m},\{\nu_i\}_{i=2}^{m+1},K,\delta)$ to be
$$\Phi(\chi_{\chi_1,\ldots,\chi_m},\nu_{m+1},\rho\mapsto\max(\Psi(m,\{\chi_i\}_{i=1}^{m-1},\{\nu_i\}_{i=2}^m,K,\rho),K(\rho)) ,\delta).$$
Let $m \geq 2$, $\{\chi_i\}_{i=1}^{m-1}$, $\{\nu_i\}_{i=2}^m \subseteq (0,\infty)^{(0,\infty)}$ and $R_1,\ldots,R_m:X \to X$ be super strongly nonexpansive mappings, such that, for all $i \in \{1,\ldots,m-1\}$, $R_i$ has SSNE-modulus $\chi_i$, and, for all $i \in \{2,\ldots,m\}$, $R_i$ has supercoercivity modulus $\nu_i$. Put $R:=R_m \circ\ldots\circ R_1$. Let $K: (0,\infty) \to (0,\infty)$ be such that for all $i$ and all $\eps > 0$ there is a $p \in X$ with $\|p\| \leq K(\eps)$ and $\|p-R_ip\| \leq \eps$.

Then for all $\delta > 0$ there is a $p \in X$ with $\|p\| \leq \Psi(m,\{\chi_i\}_{i=1}^{m-1},\{\nu_i\}_{i=2}^m,K,\delta)$ and $\|p-Rp\| \leq \delta$.
\end{theorem}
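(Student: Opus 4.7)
The plan is to prove the theorem by induction on $m \geq 2$, where the base case $m=2$ is exactly Theorem~\ref{thm-phi} applied to $R_1$ (with SSNE-modulus $\chi_1$) and $R_2$ (with supercoercivity modulus $\nu_2$), since the recursive definition precisely sets $\Psi(2,\{\chi_1\},\{\nu_2\},K,\delta)=\Phi(\chi_1,\nu_2,K,\delta)$.

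For the inductive step, assume the statement holds for some $m \geq 2$, and consider the $m+1$ case with mappings $R_1,\ldots,R_{m+1}$. The idea is to split the composition as $R = R_{m+1} \circ R'$ where $R':=R_m\circ\ldots\circ R_1$, and then apply Theorem~\ref{thm-phi} to the pair $(R', R_{m+1})$. For this, I need three ingredients: an SSNE-modulus for $R'$, a supercoercivity modulus for $R_{m+1}$, and a uniform bound on approximate fixed points for both $R'$ and $R_{m+1}$. The first is obtained from Proposition~\ref{prop-comp}, which gives $R'$ the SSNE-modulus $\chi_{\chi_1,\ldots,\chi_m}$; the second is $\nu_{m+1}$ by hypothesis.

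For the approximate fixed point bound, I combine the induction hypothesis applied to $R_1,\ldots,R_m$ (which yields, for every $\rho>0$, a $p\in X$ with $\|p\|\leq\Psi(m,\{\chi_i\}_{i=1}^{m-1},\{\nu_i\}_{i=2}^m,K,\rho)$ and $\|p-R'p\|\leq\rho$) with the hypothesis on $R_{m+1}$ itself (which yields $\|p\|\leq K(\rho)$). Taking the pointwise maximum gives exactly the function $K'(\rho):=\max\bigl(\Psi(m,\{\chi_i\}_{i=1}^{m-1},\{\nu_i\}_{i=2}^m,K,\rho),K(\rho)\bigr)$ appearing in the recursive definition of $\Psi$, and $K'$ serves uniformly as the approximate fixed point bound for both $R'$ and $R_{m+1}$.

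Applying Theorem~\ref{thm-phi} to $R'$ and $R_{m+1}$ with data $\chi_{\chi_1,\ldots,\chi_m}$, $\nu_{m+1}$ and $K'$ then produces, for every $\delta>0$, a $p\in X$ with $\|p-Rp\|\leq\delta$ and $\|p\|\leq\Phi(\chi_{\chi_1,\ldots,\chi_m},\nu_{m+1},K',\delta)$, which is exactly $\Psi(m+1,\{\chi_i\}_{i=1}^m,\{\nu_i\}_{i=2}^{m+1},K,\delta)$ by definition. This completes the induction. There is no real mathematical obstacle beyond the base case: the only thing to watch is the bookkeeping ensuring that the modulus assumptions line up (in particular, one uses the SSNE-modulus only for $R_1,\ldots,R_m$ and the supercoercivity modulus only for $R_2,\ldots,R_{m+1}$, which is consistent with both the hypotheses and the asymmetric role of the two factors in Theorem~\ref{thm-phi}).
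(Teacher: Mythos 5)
Your proposal is correct and takes essentially the same approach as the paper: the paper also proves this by induction on $m$, invoking Theorem~\ref{thm-phi} for both the base case and the inductive step, and using Proposition~\ref{prop-comp} to supply the SSNE-modulus $\chi_{\chi_1,\ldots,\chi_m}$ for the composition $R_m\circ\ldots\circ R_1$. The paper's proof is terse (it merely points to the analogous argument in \cite[Theorem 2.3]{Sip22}), whereas you spell out the decomposition $R=R_{m+1}\circ(R_m\circ\ldots\circ R_1)$ and the construction of the combined bound $K'$, which matches the recursion defining $\Psi$ exactly.
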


\begin{proof}
As in the proof of \cite[Theorem 2.3]{Sip22}, this follows by simple induction on $m$, using Theorem~\ref{thm-phi} for both the base step and the induction step and the fact that, by Proposition~\ref{prop-comp}, for each $l$, $R_l \circ \ldots\circ  R_1$ has SSNE-modulus $\chi_{\chi_1,\ldots,\chi_l}$.
\end{proof}

As in \cite{Sip22}, we shall need the following result, which quantitatively connects, for strongly nonexpansive mappings, the approximate fixed point property to asymptotic regularity.

\begin{theorem}[{cf. \cite[Theorem 1]{Koh19b}, \cite[Theorem 2.5]{Sip22}}]\label{vp}
Define, for any $\eps$, $b$, $d > 0$, $\alpha : (0, \infty) \to (0, \infty)$ and $\omega:(0,\infty) \times (0,\infty) \to (0,\infty)$,
$$\Gamma(\eps,b,d,\alpha,\omega):=\left\lceil \frac{18b + 12\alpha(\eps/6)}\eps-1\right\rceil \cdot \left\lceil \frac{d}{\omega\left(d,\frac{\eps^2}{27b + 18\alpha(\eps/6)}\right)} \right\rceil.$$
Let $T:X\to X$ and $\omega:(0,\infty) \times (0,\infty) \to (0,\infty)$ such that $T$ is strongly nonexpansive with modulus $\omega$. Let $\alpha : (0, \infty) \to (0, \infty)$ such that for any $\delta > 0$ there is a $p \in X$ with $\|p\|\leq\alpha(\delta)$ and $\|p-Tp\| \leq \delta$. Then for any $\eps$, $b$, $d>0$ and any $x \in X$ with $\|x\| \leq b$ and $\|x-Tx\|\leq d$, we have that for any $n \geq \Gamma(\eps,b,d,\alpha,\omega)$, $\|T^nx-T^{n+1}x\|\leq \eps$.
\end{theorem}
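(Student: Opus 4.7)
The plan is a proof by contradiction in the spirit of the cited papers \cite[Theorem 1]{Koh19b} and \cite[Theorem 2.5]{Sip22}. Assume $\|T^nx - T^{n+1}x\| > \eps$ for all $n \leq \Gamma(\eps,b,d,\alpha,\omega)$. Set $v_n := T^nx - T^{n+1}x$ and $a_n := \|v_n\|$. Since $T$ is nonexpansive, the sequence $(a_n)$ is nonincreasing with $a_0 \leq d$, and applying the SNE modulus $\omega$ to the pair $(T^nx, T^{n+1}x)$ (whose distance is bounded by $d$) yields: whenever $a_n - a_{n+1} < \omega(d,\eta)$, then $\|v_n - v_{n+1}\| < \eta$.

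The first main step is a pigeonhole argument. Put $\eta := \eps^2/(27b + 18\alpha(\eps/6))$ and $K := \lceil (18b + 12\alpha(\eps/6))/\eps - 1\rceil$, so that $\Gamma$ factors as $K \cdot \lceil d/\omega(d,\eta)\rceil$. Since the total decrease of $(a_n)$ is bounded by $d$, the number of indices $n$ with $a_n - a_{n+1} \geq \omega(d,\eta)$ is at most $\lceil d/\omega(d,\eta)\rceil$, and within the window $[0,\Gamma]$ one extracts a run of $K$ consecutive ``small drop'' indices $n_0, n_0+1, \ldots, n_0+K-1$. For each such $n$ the SNE bound gives $\|v_n - v_{n+1}\| < \eta$, and telescoping yields $\|v_{n_0+k} - v_{n_0}\| \leq k\eta$ for every $0 \leq k \leq K-1$.

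The second step injects the approximate fixed point. Choose $p \in X$ with $\|p\| \leq \alpha(\eps/6)$ and $\|p - Tp\| \leq \eps/6$. By nonexpansiveness, $\|T^np - T^{n+1}p\| \leq \eps/6$ and $\|T^nx - T^np\| \leq \|x - p\| \leq b + \alpha(\eps/6)$ for all $n$. The identity $\sum_{k=0}^{K-1} v_{n_0+k} = T^{n_0}x - T^{n_0+K}x$ then gives, on the one hand, the upper bound
$$\|T^{n_0}x - T^{n_0+K}x\| \leq 2(b + \alpha(\eps/6)) + K\cdot\eps/6,$$
and, on the other hand, using $\|v_{n_0+k}\| \geq \eps$ together with $\|v_{n_0+k} - v_{n_0}\| \leq k\eta$, the reverse triangle inequality gives the lower bound
$$\|T^{n_0}x - T^{n_0+K}x\| \geq K\eps - \binom{K}{2}\eta.$$
Combining these two bounds and substituting the definitions of $K$ and $\eta$ produces a numerical contradiction, completing the proof.

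The main obstacle is not conceptual but one of bookkeeping. The numerators $18$, $12$, and $27$ in $\Gamma$ must be chosen so that the telescoping error $\binom{K}{2}\eta$, the approximate-fixed-point drift $K\eps/6$, and the boundary term $2(b + \alpha(\eps/6))$ jointly remain strictly less than $K\eps$; the offset $-1$ in the definition of $K$ and the explicit value of $\eta$ are calibrated to absorb the off-by-one loss in the pigeonhole step producing the run of $K$ consecutive small-drop indices inside a window of size $K \cdot \lceil d/\omega(d,\eta)\rceil$.
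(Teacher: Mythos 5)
The paper gives no proof of Theorem~\ref{vp}; it is carried over from \cite[Theorem 1]{Koh19b} and \cite[Theorem 2.5]{Sip22}, and your reconstruction follows exactly the standard scheme used there: pigeonhole on the nonincreasing sequence $a_n:=\|T^nx-T^{n+1}x\|$ (whose drop is controlled by $d$) to find $K$ consecutive small-drop indices, telescope the SNE modulus to keep the differences $v_{n_0+k}$ within $k\eta$ of $v_{n_0}$, and squeeze $\|T^{n_0}x-T^{n_0+K}x\|$ between the lower bound $K\eps-\binom{K}{2}\eta$ and the upper bound $2(b+\alpha(\eps/6))+K\eps/6$ furnished by the approximate fixed point. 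So the approach is correct and essentially the same.

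Two points deserve a little more care than you give them. First, the pigeonhole as stated is not quite tight: the bound ``at most $\lceil d/\omega\rceil$ big drops'' does not by itself produce a run of $K$ consecutive small-drop indices inside a window of length $K\lceil d/\omega\rceil$ --- when $d/\omega$ is an integer and the number of big drops equals $\lceil d/\omega\rceil$, all maximal small-drop runs can have length $<K$. What saves the argument is the strict bound: since $a_\Gamma>\eps>0$ under the contradiction hypothesis, the total drop $a_0-a_\Gamma$ is strictly less than $d$, so the number $m$ of big drops satisfies $m\omega<d$, hence $m\leq\lceil d/\omega\rceil-1$, and then the run argument goes through. Second, the arithmetic you defer to ``bookkeeping'' needs actual checking (it does work out: writing $u:=3b+2\alpha(\eps/6)$ so that $K=\lceil 6u/\eps-1\rceil$ and $\eta=\eps^2/(9u)$, minimizing $5K\eps/6-\binom{K}{2}\eta$ over the admissible range $\eps\in[6u/(K+1),6u/K)$ gives $\frac{Ku(3K+7)}{(K+1)^2}\geq\frac{5u}{2}>2(b+\alpha(\eps/6))$ for all $K\geq1$), and the degenerate case $\eps\geq 18b+12\alpha(\eps/6)$, where $K\leq0$ and $\Gamma\leq0$, must be handled separately (there the estimate $\|x-Tx\|\leq 2(b+\alpha(\eps/6))+\eps/6\leq\eps/3<\eps$ and monotonicity of $(a_n)$ settle it directly).
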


Putting together the above results, we obtain the following generalization of \cite[Theorem 2.8]{Sip22}.

\begin{theorem}\label{thm-main}
Let $\omega_\bullet$ be defined as in Proposition~\ref{omega-bullet}, $\chi_\bullet$ as in Proposition~\ref{prop-comp}, $\Psi$ as in Theorem~\ref{thm-psi} and $\Gamma$ as in Theorem~\ref{vp}. Define, for all $m \geq 2$, $\eps$, $b$, $d > 0$, $K : (0,\infty) \to (0, \infty)$, $\{\chi_i\}_{i=1}^m$, $\{\nu_i\}_{i=2}^m \subseteq (0,\infty)^{(0,\infty)}$,
$$\Sigma_{m,\{\chi_i\}_{i=1}^m,\{\nu_i\}_{i=2}^m,K,b,d}(\eps):=\Gamma\left(\eps,b,d,\delta\mapsto \Psi(m,\{\chi_i\}_{i=1}^{m-1},\{\nu_i\}_{i=2}^m,K,\delta),\omega_{\chi_{\chi_1,\ldots,\chi_m}}\right).$$
Let $m \geq 2$, $\{\chi_i\}_{i=1}^m$, $\{\nu_i\}_{i=2}^m \subseteq (0,\infty)^{(0,\infty)}$ and $R_1,\ldots,R_m:X \to X$ be super strongly nonexpansive mappings, such that, for all $i$, $R_i$ has SSNE-modulus $\chi_i$, and, for all $i \in \{2,\ldots,m\}$, $R_i$ has supercoercivity modulus $\nu_i$. Put $R:=R_m \circ\ldots\circ R_1$. Let $K: (0,\infty) \to (0,\infty)$ be such that for all $i$ and all $\eps > 0$ there is a $p \in X$ with $\|p\| \leq K(\eps)$ and $\|p-R_ip\| \leq \eps$.

Then, for any $b>0$, $d>0$ and any $x \in X$ with $\|x\| \leq b$ and $\|x-Rx\|\leq d$, we have that $\Sigma_{m,\{\chi_i\}_{i=1}^m,\{\nu_i\}_{i=2}^m,K,b,d}$ is a rate of asymptotic regularity for the sequence $(R^nx)$ w.r.t. $R$, i.e., for any $\eps>0$ and $n \geq \Sigma_{m,\{\chi_i\}_{i=1}^m,\{\nu_i\}_{i=2}^m,K,b,d}(\eps)$,
$$\|R^nx-R^{n+1}x\|\leq \eps.$$
\end{theorem}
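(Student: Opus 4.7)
The plan is to assemble this theorem directly from the ingredients proved earlier in the paper, so it really becomes a matter of plugging results into the $\Gamma$ rate of Theorem~\ref{vp}. The shape of $\Sigma$ makes the strategy transparent: $\Gamma$ is invoked with the strong-nonexpansiveness modulus $\omega_{\chi_{\chi_1,\ldots,\chi_m}}$ of the composition $R$ and with an approximate-fixed-point bound $\delta \mapsto \Psi(m,\{\chi_i\}_{i=1}^{m-1},\{\nu_i\}_{i=2}^m,K,\delta)$ for $R$. So first I would verify each of these two data for $R$, then apply Theorem~\ref{vp}.

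Concretely: by Proposition~\ref{prop-comp}, the composition $R = R_m \circ \ldots \circ R_1$ is super strongly nonexpansive with SSNE-modulus $\chi_{\chi_1,\ldots,\chi_m}$. Applying Proposition~\ref{omega-bullet} to $R$ then gives that $R$ is strongly nonexpansive with SNE-modulus $\omega_{\chi_{\chi_1,\ldots,\chi_m}}$. Next, by Theorem~\ref{thm-psi} applied to $R_1, \ldots, R_m$ with the given SSNE-moduli $\chi_1,\ldots,\chi_{m-1}$, supercoercivity moduli $\nu_2,\ldots,\nu_m$, and approximate-fixed-point bound $K$, we obtain that for every $\delta > 0$ there is a $p \in X$ with $\|p\| \leq \Psi(m,\{\chi_i\}_{i=1}^{m-1},\{\nu_i\}_{i=2}^m,K,\delta)$ and $\|p - Rp\| \leq \delta$. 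Thus the function $\alpha(\delta) := \Psi(m,\{\chi_i\}_{i=1}^{m-1},\{\nu_i\}_{i=2}^m,K,\delta)$ serves as an approximate-fixed-point bound for $R$ in the sense required by Theorem~\ref{vp}.

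Given $b$, $d > 0$ and $x \in X$ with $\|x\| \leq b$ and $\|x - Rx\| \leq d$, Theorem~\ref{vp} now applies to $R$ with this $\alpha$ and with the SNE-modulus $\omega_{\chi_{\chi_1,\ldots,\chi_m}}$. It yields that for any $\eps > 0$ and any $n \geq \Gamma(\eps, b, d, \alpha, \omega_{\chi_{\chi_1,\ldots,\chi_m}})$ we have $\|R^n x - R^{n+1} x\| \leq \eps$. By the very definition of $\Sigma_{m,\{\chi_i\}_{i=1}^m,\{\nu_i\}_{i=2}^m,K,b,d}(\eps)$, this quantity is exactly $\Gamma(\eps, b, d, \alpha, \omega_{\chi_{\chi_1,\ldots,\chi_m}})$, which concludes the proof.

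There is no real obstacle: the work has been done in Proposition~\ref{prop-comp}, Proposition~\ref{omega-bullet}, Theorem~\ref{thm-psi} and Theorem~\ref{vp}, and this final theorem is only the bookkeeping that combines them. The only subtlety to double-check is that Theorem~\ref{thm-psi} requires $R_i$ to have an SSNE-modulus for $i \leq m-1$ and a supercoercivity modulus for $i \geq 2$, which are both furnished by the hypotheses (since every $R_i$ has an SSNE-modulus $\chi_i$, in particular $R_1,\ldots,R_{m-1}$ do), so the inputs to $\Psi$ match exactly.
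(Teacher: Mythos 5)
Your proof is correct and follows exactly the approach the paper intends: the paper itself leaves this theorem without an explicit proof, saying only that it is obtained by ``putting together the above results,'' and your argument is precisely the bookkeeping that does so (Proposition~\ref{prop-comp} and Proposition~\ref{omega-bullet} give the SNE-modulus $\omega_{\chi_{\chi_1,\ldots,\chi_m}}$ for $R$, Theorem~\ref{thm-psi} gives the approximate-fixed-point bound $\delta\mapsto\Psi(m,\{\chi_i\}_{i=1}^{m-1},\{\nu_i\}_{i=2}^m,K,\delta)$, and Theorem~\ref{vp} assembles these into the rate $\Gamma$, which is $\Sigma$ by definition). Your final remark verifying that the index ranges of the moduli match the hypotheses of Theorem~\ref{thm-psi} is the one place where carelessness could have crept in, and you handled it correctly.
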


Since the corresponding qualitative result is also new here, we present it below in full.

\begin{theorem}\label{thm-qual}
Let $X$ be a Hilbert space, $m \geq 1$ and $R_1,\ldots,R_m:X \to X$ be supercoercively super strongly nonexpansive mappings which have the approximate fixed point property.

Then $R_m \circ\ldots\circ R_1$ is asymptotically regular.
\end{theorem}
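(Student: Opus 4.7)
The plan is to reduce the claim to the quantitative Theorem~\ref{thm-main} by non-effectively promoting the qualitative hypotheses to quantitative ones. Since each $R_i$ is supercoercively super strongly nonexpansive, it admits, by definition, both an SSNE-modulus $\chi_i:(0,\infty)\to(0,\infty)$ and a supercoercivity modulus $\nu_i:(0,\infty)\to(0,\infty)$. The approximate fixed point property of each $R_i$ supplies, for every $\eps>0$ and every $i\in\{1,\ldots,m\}$, some $p_{i,\eps}\in X$ with $\|p_{i,\eps}-R_ip_{i,\eps}\|\leq\eps$; setting $K(\eps):=\max_{1\leq i\leq m}\|p_{i,\eps}\|$ then gives a function $K:(0,\infty)\to(0,\infty)$ meeting the uniform bound required by Theorem~\ref{thm-main}.

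The case $m=1$ is not directly covered by Theorem~\ref{thm-main}, so I would handle it separately: $R_1$ is super strongly nonexpansive, hence strongly nonexpansive by Proposition~\ref{omega-bullet} with modulus $\omega_{\chi_1}$, so for any $x\in X$, Theorem~\ref{vp} applied with $T:=R_1$, $\omega:=\omega_{\chi_1}$, $\alpha:=K$, $b:=\|x\|$, and $d:=\|x-R_1x\|$ yields a rate of asymptotic regularity, whence $\|R_1^nx-R_1^{n+1}x\|\to 0$.

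For $m\geq 2$, Theorem~\ref{thm-main} applies directly with $\{\chi_i\}_{i=1}^m$, $\{\nu_i\}_{i=2}^m$, and $K$: given any $x\in X$, taking $b:=\|x\|$ and $d:=\|x-Rx\|$ (both finite), the theorem furnishes the rate $\Sigma_{m,\{\chi_i\}_{i=1}^m,\{\nu_i\}_{i=2}^m,K,b,d}$ for the orbit $(R^nx)$, so in particular $\lim_n\|R^nx-R^{n+1}x\|=0$. Since $x$ was arbitrary, $R=R_m\circ\ldots\circ R_1$ is asymptotically regular.

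The only nontrivial step is the noneffective extraction of the moduli $\chi_i$, $\nu_i$, and $K$ from the qualitative hypotheses; no substantive obstacle arises, since all the combinatorial work has already been done inside Theorems~\ref{thm-phi},~\ref{thm-psi},~\ref{vp} and~\ref{thm-main}. The qualitative theorem is worth stating precisely because its conclusion is genuinely new, even though its proof now reduces to a routine application of the quantitative refinement.
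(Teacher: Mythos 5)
Your proposal is correct and follows essentially the same route as the paper: the paper offers no separate proof of Theorem~\ref{thm-qual}, presenting it as a direct qualitative consequence of the quantitative Theorem~\ref{thm-main}. You correctly observe that Theorem~\ref{thm-main} is stated only for $m\geq 2$, so the $m=1$ case needs a separate (and easier) argument, which you correctly supply via Proposition~\ref{omega-bullet} and Theorem~\ref{vp}; this is a detail the paper leaves implicit.
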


\section{Acknowledgements}

I would like to thank Ulrich Kohlenbach for suggesting the final form of Proposition~\ref{prop-comp}; and Lauren\c tiu Leu\c stean for his comments which led to an improvement of the presentation.


\begin{thebibliography}{99}

\bibitem{Bau03}
H. Bauschke,
The composition of projections onto closed convex sets in Hilbert space is asymptotically regular.
{\it Proceedings of the American Mathematical Society} 131, no. 1, 141--146,
2003.

\bibitem{BauCom17}
H. Bauschke, P. Combettes,
{\it Convex Analysis and Monotone Operator Theory in Hilbert Spaces}. Second Edition.
Springer,
2017.

\bibitem{BauMarMofWan12}
H. Bauschke, V. Mart\'in-M\'arquez, S. Moffat, X. Wang,
Compositions and convex combinations of asymptotically regular firmly nonexpansive mappings are also asymptotically regular.
{\it Fixed Point Theory and Applications} 2012:53,
2012.

\bibitem{BauMou20}
H. Bauschke, W. Moursi,
On the minimal displacement vector of compositions and convex combinations of nonexpansive mappings.
{\it Foundations of Computational Mathematics} 20, 1653--1666, 2020.

\bibitem{BreHar76}
H. Br\'ezis, A. Haraux,
Image d'une somme d'op\'erateurs monotones et applications.
{\it Israel Journal of Mathematics} 23, no. 2, 165--186,
1976.

\bibitem{BroPet66}
F. E. Browder, W. V. Petryshyn,
The solution by iteration of nonlinear functional equations in Banach spaces.
{\it Bulletin of the American Mathematical Society} 72, 571--575,
1966.

\bibitem{BruRei77}
R. E. Bruck, S. Reich,
Nonexpansive projections and resolvents of accretive operators in Banach spaces.
{\it Houston Journal of Mathematics} 3, no. 4, 459--470,
1977.

\bibitem{Koh08}
U. Kohlenbach,
{\it Applied proof theory: Proof interpretations and their use in mathematics}.
Springer Monographs in Mathematics, Springer,
2008.

\bibitem{Koh16}
U. Kohlenbach,
On the quantitative asymptotic behavior of strongly nonexpansive mappings in Banach and geodesic spaces.
{\it Israel Journal of Mathematics} 216, no. 1, 215--246,
2016.

\bibitem{Koh19b}
U. Kohlenbach,
A polynomial rate of asymptotic regularity for compositions of projections in Hilbert space.
{\it Foundations of Computational Mathematics} 19, no. 1, 83--99,
2019.

\bibitem{Koh19}
U. Kohlenbach,
Proof-theoretic methods in nonlinear analysis.
In: B. Sirakov, P. Ney de Souza, M. Viana (eds.), {\it Proceedings of the International Congress of Mathematicians 2018
(ICM 2018)}, Vol. 2 (pp. 61--82). 
World Scientific, 2019.

\bibitem{Koh20}
U. Kohlenbach,
Local formalizations in nonlinear analysis and related areas and proof-theoretic tameness.
In: P. Weingartner, H.-P. Leeb (eds.), {\it Kreisel's Interests. On the Foundations of Logic and Mathematics} (pp. 45--61). College Publications, Tributes vol. 41, 2020. 

\bibitem{KohLeu12}
U. Kohlenbach, L. Leu\c stean,
On the computational content of convergence proofs via Banach limits.
{\it Philosophical Transactions of the Royal Society A} Vol. 370, Issue 1971 (Theme Issue `The foundations of computation, physics and mentality: the Turing legacy'), 3449--3463,
2012.

\bibitem{KohPis}
U. Kohlenbach, N. Pischke,
Proof theory and non-smooth analysis.
{\it Philosophical Transactions of the Royal Society A}, Volume 381, Issue 2248, 20220015 [21 pp.], \url{http://doi.org/10.1098/rsta.2022.0015},
2023.

\bibitem{LiuMouVan22}
L. Liu, W. M. Moursi, J. Vanderwerff,
Strongly nonexpansive mappings revisited: uniform monotonicity and operator splitting.
arXiv:2205.09040 [math.OC], 2022.

\bibitem{Pis22}
N. Pischke,
Logical metatheorems for accretive and (generalized) monotone set-valued operators.
{\it Journal of Mathematical Logic}, \url{https://doi.org/10.1142/S0219061323500083} [59 pp.],
2023.

\bibitem{Sim06}
S. Simons,
LC-functions and maximal monotonicity.
{\it Journal of Nonlinear and Convex Analysis} 7, no. 1, 123--138,
2006.

\bibitem{Sip22}
A. Sipo\c s,
Quantitative inconsistent feasibility for averaged mappings.
{\it Optimization Letters} 16, no. 6, 1915--1925, 2022.

\end{thebibliography}
\end{document}